\newcommand{\Z}{\mathbb{Z}}
\title[Nonunital prime rings graded by ordered groups]{Nonunital prime rings graded by ordered groups}
\newtheorem{thm}{Theorem}
\newtheorem{prop}[thm]{Proposition}
\newtheorem{lem}[thm]{Lemma}
\newtheorem{cor}[thm]{Corollary}
\theoremstyle{definition}
\newtheorem{defi}[thm]{Definition}
\newtheorem{exa}[thm]{Example}
\newtheorem{rem}[thm]{Remark}
\newcommand{\Q}{\mathbb{Q}}
\author{Daniel L\"{a}nnstr\"{o}m}
\address{Advanced Programs, Aeronautics, 
SAAB AB, SE-58133 Link\"{o}ping, Sweden}
\author{Patrik Lundstr\"{o}m}
\address{Department of Engineering Science,
University West, SE-46186 Trollh\"{a}ttan, Sweden}
\author{Johan \"{O}inert}
\address{Department of Mathematics and Natural Sciences,
Blekinge Institute of Technology,
SE-37179 Karlskrona, Sweden}
\address{Department of Engineering,
University of Sk\"{o}vde,
SE-54128 Sk\"{o}vde, Sweden}
\author{Stefan Wagner}
\address{Department of Mathematics and Natural Sciences,
Blekinge Institute of Technology,
SE-37179 Karlskrona, Sweden}
\email{daniel.lannstrom@saabgroup.com}
\email{patrik.lundstrom@hv.se}
\email{johan.oinert@bth.se}
\email{stefan.wagner@bth.se}
\subjclass[2020]{16D25, 16D80, 16W50, 16N60, 16S88, 16S35}
\keywords{group graded ring, 
symmetrically graded ring, prime ring, 
prime ideal,
ordered group, 
Leavitt path ring}
\date{\today}
\begin{document}

\begin{abstract}
Let \( G \) be a group with identity element \( e \), and suppose that \( S \) is an associative \( G \)-graded ring that is not necessarily unital.
In the case where $G$ is an ordered group, we show that a graded ideal is prime if and only if it is graded prime. Consequently, in that setting, a graded ring is prime if and only if it is graded prime.
For any group \( G \), if \( S \) is what we call \emph{ideally symmetrically \( G \)-graded}, then we show that there is a bijective correspondence between the \( G \)-graded prime ideals of \( S \) and the \( G \)-prime ideals of \( S_e \). We use this correspondence in the case where \( G \) is ordered and \( S \) is ideally symmetrically \( G \)-graded to show that \( S \) is prime if and only if \( S_e \) is \( G \)-prime.
These results generalize classical theorems by N\u{a}st\u{a}sescu and Van Oystaeyen to a nonunital setting. As applications, we provide a new proof of a primeness criterion for Leavitt path rings and establish conditions for primeness of symmetrically $G$-graded subrings of group rings over fully idempotent rings.
\end{abstract}

\maketitle


\section{Introduction}\label{sec:introduction}

Throughout this article, \( S \) denotes an associative but not necessarily unital ring. Unless explicitly stated, all ideals of \( S \) are assumed to be two-sided.
Recall that a proper ideal $P$ of $S$ is called \emph{prime} if, for all ideals \( A, B \subseteq S \), the inclusion \( AB \subseteq P \) implies \( A \subseteq P \) or \( B \subseteq P \). The ring \( S \) is called \emph{prime} if the zero ideal \( \{0\} \) is prime. This notion generalizes that of integral domains to the noncommutative setting. In fact, a commutative ring is prime if and only if it is an integral domain. The class of prime rings includes many familiar constructions, such as simple rings, primitive rings (left or right), and matrix rings over integral domains.

Let \( G \) be a group with identity element \( e \). Recall that \( S \) is said to be \emph{\( G \)-graded} if there are additive subgroups \( S_x \subseteq S \), for \( x \in G \), such that $S = \oplus_{x \in G} S_x$ and $S_x S_y \subseteq S_{xy}$, for all $x,y \in G$.
If, additionally, \( S_x S_y = S_{xy} \) for all \( x, y \in G \), then \( S \) is said to be \emph{strongly \( G \)-graded}. 
Throughout the rest of this introduction, let $S$ be a $G$-graded ring.

A fundamental problem, studied over the past five decades, is to determine when a graded ring is prime; see e.g. \cite{abrams1993,cohen1983,connell1963,lannstrom2025,nastasescu1982,nastasescu2004,passman1962,passman1970,passman1977,passman1983,passman1984,passmaninfinite1984}. In the unital and strongly graded case, Passman provided a complete solution (see Theorem~\ref{thm:main}), relying on the following notions. Let \( H, N \) be subgroups of \( G \).
Put \( S_N := \oplus_{x \in N} S_x \).
For an ideal $I$ of $S_N$ and \( x \in G \), define \( I^x := S_{x^{-1}} I S_x \).  The ideal \( I \) is said to be \emph{\( H \)-invariant} if \( I^x \subseteq I \) for all \( x \in H \). 

\begin{thm}[Passman {\cite[Thm.~1.3]{passmaninfinite1984}}]\label{thm:main}
Suppose that $S$ is a unital and strongly $G$-graded ring.
Then $S$ is not prime if and only if there exist:
\begin{enumerate}[{\rm (i)}]
    \item subgroups $N \lhd H \subseteq G$ with $N$ finite,
    \item an $H$-invariant ideal $I$ of $S_e$ such that
    $I^x I = \{ 0 \}$ for every $x \in G \setminus H$, and
    \item nonzero $H$-invariant 
    ideals $\tilde{A}, \tilde{B}$ of $S_N$ such that
    $\tilde{A},\tilde{B} \subseteq I S_N$ and
    $\tilde{A}\tilde{B} = \{ 0 \}$.
\end{enumerate}
\end{thm}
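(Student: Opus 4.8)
The plan is to prove the two implications separately. The ``if'' direction is a direct computation with the grading, which I would carry out as follows; the ``only if'' direction is the substantial one and, in the end, essentially reproduces Passman's $\Delta$-methods, so that is where I expect the real work to lie.

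\emph{The ``if'' direction.} Assume the data in (i)--(iii) are given and put $A := S\tilde{A}S$ and $B := S\tilde{B}S$. Since $S$ is unital and $\tilde{A},\tilde{B}$ are nonzero, $A$ and $B$ are nonzero ideals of $S$, so it suffices to prove $AB = \{0\}$. As $SS = S$ and $S = \oplus_{x\in G} S_x$, it is enough to show $\tilde{A}\,S_x\,\tilde{B} = \{0\}$ for every $x\in G$. If $x\in H$, then $1\in S_e = S_x S_{x^{-1}}$ by unitality and strong grading, so $\tilde{A}S_x = S_x S_{x^{-1}}\tilde{A}S_x = S_x\tilde{A}^x \subseteq S_x\tilde{A}$, where $H$-invariance of $\tilde{A}$ is used together with $S_{x^{-1}}S_N S_x = S_N$ (valid since $x\in H$ and $N\lhd H$); hence $\tilde{A}S_x\tilde{B} \subseteq S_x\tilde{A}\tilde{B} = \{0\}$. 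If instead $x\in G\setminus H$, then $\tilde{A},\tilde{B}\subseteq IS_N$ and $S_N S_x \subseteq \oplus_{n\in N}S_{nx}$ give $\tilde{A}S_x\tilde{B}\subseteq \bigl(\sum_{n\in N} I S_{nx} I\bigr)S_N$; and for each $n\in N$, using $1\in S_{nx}S_{(nx)^{-1}}$ one gets $I S_{nx} I \subseteq S_{nx}\bigl(S_{(nx)^{-1}} I S_{nx}\bigr)I = S_{nx}\, I^{nx}\, I = \{0\}$, because $nx\in G\setminus H$ and (ii) applies. Thus $\tilde{A}S_x\tilde{B} = \{0\}$ in all cases, so $AB = \{0\}$ and $S$ is not prime.

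\emph{The ``only if'' direction.} Suppose $S$ is not prime and fix nonzero ideals $A,B$ of $S$ with $AB = \{0\}$. For an ideal $J$ of $S$, let $\pi_e(J)\subseteq S_e$ denote the additive group of degree-$e$ components of the elements of $J$. A short computation shows $\pi_e(J)$ is a $G$-invariant ideal of $S_e$: for $x\in G$, $a\in J$, $u\in S_{x^{-1}}$ and $v\in S_x$, the degree-$e$ component of $uav\in J$ equals $u\,a_e\,v$, which gives both the ideal property (take $x=e$) and $G$-invariance. Moreover $\pi_e(J)\neq\{0\}$ whenever $J\neq\{0\}$: choosing $0\neq a\in J$ and $x_0$ in its support, unitality and strong grading force $S_{x_0^{-1}}a_{x_0}\neq\{0\}$ (otherwise $a_{x_0}\in S_e a_{x_0} = S_{x_0}S_{x_0^{-1}}a_{x_0} = \{0\}$, a contradiction), so some element of $S_{x_0^{-1}}a\subseteq J$ has nonzero degree-$e$ component. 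Set $I_A := \pi_e(A)$ and $I_B := \pi_e(B)$, nonzero $G$-invariant ideals of $S_e$. If $I_A I_B = \{0\}$, we are done: take $N := \{e\}$, $H := G$ (so (i) holds trivially and (ii) vacuously), $I := S_e$, and $\tilde{A} := I_A$, $\tilde{B} := I_B$.

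The remaining case $I_A I_B \neq \{0\}$ is the heart of the matter: the degree-$e$ part alone no longer witnesses non-primeness, and one must locate a nontrivial \emph{finite} subgroup carrying the obstruction. Here I would follow Passman's strategy: pass to a counterexample $a\in A$, $b\in B$ with $ab = 0$ that is minimal in the combined size of the supports of $a$ and $b$, and analyse, by a cancellation argument on these finite supports, the subgroup $H\subseteq G$ that ``stabilises'' the resulting configuration. From $H$ one extracts a finite normal subgroup $N$ (the analogue of the torsion part of the $FC$-center in the classical group-ring case), checks that the relevant piece of $S_e$ is an $H$-invariant ideal $I$ satisfying $I^x I = \{0\}$ for all $x\in G\setminus H$ (so that the obstruction is ``supported on $H$''), and finally squeezes $A$ and $B$ into nonzero $H$-invariant ideals $\tilde{A},\tilde{B}$ of $S_N$ contained in $IS_N$ with $\tilde{A}\tilde{B} = \{0\}$. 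I expect this last case --- and specifically the combinatorial cancellation lemma that produces the finite subgroup $N$ --- to be the main obstacle; by comparison the reduction to $\pi_e$, the bookkeeping with cosets for $N\lhd H\subseteq G$, and the verification that the squeezed ideals remain $H$-invariant are comparatively routine once that lemma is available.
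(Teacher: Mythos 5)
The first thing to note is that the paper does not prove this statement at all: it is quoted as Passman's theorem and attributed to \cite{passmaninfinite1984}, so there is no in-paper argument to compare yours with; the only question is whether your attempt is a complete proof in its own right, and it is not. Your ``if'' direction is correct and complete: setting $A:=S\tilde{A}S$, $B:=S\tilde{B}S$, reducing to $\tilde{A}S_x\tilde{B}=\{0\}$, handling $x\in H$ via $\tilde{A}S_x\subseteq S_x\tilde{A}^x\subseteq S_x\tilde{A}$, and handling $x\in G\setminus H$ via $IS_{nx}I\subseteq S_{nx}I^{nx}I=\{0\}$ (using $nx\notin H$ because $n\in N\subseteq H$) all check out, as does your preliminary reduction in the ``only if'' direction: $\pi_e(J)$ is indeed a nonzero $G$-invariant ideal of $S_e$ for any nonzero ideal $J$ of $S$, and the sub-case $I_AI_B=\{0\}$ is correctly disposed of with $N=\{e\}$, $H=G$, $I=S_e$.

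The gap is the remaining case $I_AI_B\neq\{0\}$, which is the entire substance of the theorem, and there you only describe a plan (``pass to a support-minimal counterexample, analyse the stabilising subgroup $H$, extract a finite normal subgroup $N$, check $I^xI=\{0\}$ off $H$, squeeze $A,B$ into $\tilde{A},\tilde{B}\subseteq IS_N$'') without proving any of its steps. Producing the finite normal subgroup $N$ and the ideal $I$ with the stated annihilation property is exactly what Passman's $\Delta$-methods accomplish, via a long chain of coset-counting and bounded-support lemmas that occupies most of \cite{passmaninfinite1984}; none of that is reproduced or replaced by an argument in your text, and it is not a routine verification that a referee could fill in. As written, your ``only if'' direction is therefore an outline resting on an external result rather than a proof; either carry out that machinery or state the hard direction as a citation, as the paper itself does.
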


In \cite[Thm.~1.3]{lannstrom2025}, the authors of 
the present article extended Theorem~\ref{thm:main} to a class of nonunital rings called \emph{nearly epsilon-strongly $G$-graded rings} (see \cite{nystedtoinert2019}). A $G$-graded ring \( S \) belongs to this class if, for each \( x \in G \) and each \( s \in S_x \), there exist \( \epsilon_x \in S_x S_{x^{-1}} \) and \( \epsilon_x' \in S_{x^{-1}} S_x \) such that
$\epsilon_x s = s = s \epsilon_x'$.

In contrast to the intricate conditions in Theorem~\ref{thm:main}, N\u{a}st\u{a}sescu 
and Van Oystaeyen have shown (see Theorem~\ref{thm:mainordered}) that primeness criteria for graded rings become much simpler if we only consider groups \( G \) that are \emph{ordered}, 
meaning that
$G$ is equipped with a total order \( \leq \) satisfying \( a \leq b \Rightarrow xay \leq xby \) for all \( a,b,x,y \in G \). 
Before stating their results,
we recall the following notions.
An ideal $I$ of $S$ is \emph{graded} if \( I = \oplus_{x \in G} I_x \), where \( I_x := I \cap S_x \). A proper graded ideal \( P \) of $S$ is \emph{graded prime} if \( AB \subseteq P \) implies \( A \subseteq P \) or \( B \subseteq P \) for all graded ideals \( A, B \subseteq S \). The ring \( S \) is \emph{graded prime} if \( \{0\} \) is a graded prime ideal. A proper \( G \)-invariant ideal \( Q \subseteq S_e \) is \emph{\( G \)-prime} if \( AB \subseteq Q \) implies \( A \subseteq Q \) or \( B \subseteq Q \) for all \( G \)-invariant ideals \( A, B \subseteq S_e \). The ring \( S_e \) is \emph{\( G \)-prime} if \( \{0\} \) is a \( G \)-prime ideal. 

\begin{thm}[N\u{a}st\u{a}sescu and Van Oystaeyen {\cite[Prop.~II.1.4]{nastasescu1982}} and {\cite[Prop.~2.11.7]{nastasescu2004}}]\label{thm:mainordered}
Suppose that $S$ is a unital and $G$-graded ring 
where $G$ is ordered. The following assertions hold:
\begin{enumerate}[{\rm (a)}]

\item The ring $S$ is prime if and only if it is graded prime.

\item Suppose that $S$ is strongly $G$-graded. Then $S$ is prime if and only if $S_e$ is $G$-prime.

\end{enumerate}
\end{thm}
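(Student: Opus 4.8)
The plan is to prove (a) first, using the hypothesis that $G$ is ordered, and then to deduce (b) from (a) by means of the standard correspondence between graded ideals of a strongly graded ring and invariant ideals of its identity component.

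For (a), one implication is free and requires no assumption on $G$: a graded ideal is in particular an ideal, so if $\{0\}$ is prime it is \emph{a fortiori} graded prime. For the converse I would argue by contradiction and exploit the order through a leading-term argument, in the spirit of the classical fact that the group ring of an ordered group over a domain is again a domain. Since the grading is a direct sum, every nonzero $a \in S$ has finite support $\Supp(a) = \{x \in G : a_x \neq 0\}$, which, being finite and totally ordered, has a largest element $x_a$; call $a_{x_a}$ the leading component of $a$. The key observation is: for nonzero $a, b \in S$ and homogeneous $s \in S_z$, the component of $asb$ in degree $x_a z x_b$ is exactly $a_{x_a}\, s\, b_{x_b}$. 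Indeed, $a_u s b_w \in S_{uzw}$ for $u \in \Supp(a)$ and $w \in \Supp(b)$, and the compatibility axiom of the order gives $uzw \leq x_a z x_b$, with equality forcing $u = x_a$ and then $w = x_b$ by cancellativity in $G$; hence no term other than $a_{x_a} s b_{x_b}$ lands in degree $x_a z x_b$. Now suppose $S$ is graded prime but not prime, so $AB = \{0\}$ for some nonzero ideals $A, B$; pick $0 \neq a \in A$ and $0 \neq b \in B$. For every $s \in S$ we have $asb \in a(SB) \subseteq AB = \{0\}$, so $asb = 0$; reading off the degree-$x_a z x_b$ component for homogeneous $s \in S_z$ yields $a_{x_a} S_z b_{x_b} = \{0\}$ for all $z$, so $a_{x_a} S b_{x_b} = \{0\}$. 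Then $S a_{x_a} S$ and $S b_{x_b} S$ are graded ideals (generated by homogeneous elements), nonzero since $S$ is unital and $a_{x_a}, b_{x_b} \neq 0$, and $(S a_{x_a} S)(S b_{x_b} S) \subseteq S (a_{x_a} S b_{x_b}) S = \{0\}$, contradicting graded primeness. This establishes (a).

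For (b), part (a) reduces the problem to showing that, for unital strongly $G$-graded $S$, the ring $S$ is graded prime if and only if $S_e$ is $G$-prime. Here I would invoke the well-known lattice isomorphism $I \mapsto I_e$, with inverse $J \mapsto SJS$, between graded ideals of $S$ and $G$-invariant ideals of $S_e$. The ingredients, all routine consequences of $S_x S_{x^{-1}} = S_e$ (whence $SS = S$, and $I_x = S_x S_{x^{-1}} I_x \subseteq S_x I_e$ for every graded ideal $I$, since $S_{x^{-1}} I_x \subseteq S_e \cap I = I_e$), are: (i) $S I_e S = I$ for every graded ideal $I$; (ii) $(SJS)_e = \sum_{x \in G} S_x J S_{x^{-1}} = J$ for every $G$-invariant ideal $J$, the last equality using $G$-invariance; and (iii) multiplicativity $(I_1 I_2)_e = (I_1)_e (I_2)_e$, where $\supseteq$ is clear and $\subseteq$ follows because $(I_1)_a (I_2)_{a^{-1}} \subseteq S_a (I_1)_e S_{a^{-1}}(I_2)_e$ while $S_a (I_1)_e S_{a^{-1}} \subseteq S_e \cap I_1 = (I_1)_e$. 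Combining (i) and (iii), $I_1 I_2 = S((I_1)_e (I_2)_e)S$, so $I_1 I_2 = \{0\}$ if and only if $(I_1)_e (I_2)_e = \{0\}$, and by (i) again $I \neq \{0\}$ if and only if $I_e \neq \{0\}$; hence the bijection carries the graded-primeness condition on $S$ precisely onto the $G$-primeness condition on $S_e$, proving (b).

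The main obstacle, and the only genuinely order-theoretic point, is the leading-term observation in the proof of (a): pinning down the top-degree component of a triple product using the order axiom together with cancellativity in $G$, and the accompanying passage from arbitrary ideals $A, B$ to the principal graded ideals generated by the leading components. The proof of (b) is then essentially bookkeeping with the strongly graded identities, the one delicate point being the inclusion $S_a (I_1)_e S_{a^{-1}} \subseteq (I_1)_e$ that makes the ideal correspondence multiplicative.
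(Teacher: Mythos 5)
Your proposal is correct, and its two key ideas are the same ones the paper runs on, though the paper packages them more generally. For (a), the paper does not argue directly with the zero ideal: it proves (Theorem~\ref{thm:Pgradedprime}) that \emph{every} proper graded ideal $P$ of a (possibly nonunital) ring graded by an ordered group is prime iff it is graded prime, using the graded $m$-system characterization (Proposition~\ref{prop:gradedmsystem}, Corollary~\ref{cor:msystem}) together with the trick of subtracting from $a,b$ their components lying in $P$ before extracting the leading terms; your leading-component computation of the degree-$x_a z x_b$ part of $asb$, followed by passing to the graded ideals generated by $a_{x_a}$ and $b_{x_b}$, is exactly the $P=\{0\}$ instance of that argument, made shorter by unitality (in the nonunital case one must also use $ab=0$ to control the $\Z a_{x_a}b_{x_b}$ part of $\langle a_{x_a}\rangle\langle b_{x_b}\rangle$). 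For (b), the paper obtains the statement from the nonunital Theorem~\ref{thm:GorderedGprime}: a unital strongly graded ring is nearly epsilon-strongly graded, hence ideally symmetrically graded (Proposition~\ref{prop:leftideallysymm}), and then the correspondence $J\mapsto\langle J\rangle$, $I\mapsto I_e$ of Propositions~\ref{prop:leftright}--\ref{prop:leftrightprime} applies; your maps $J\mapsto SJS$ and $I\mapsto I_e$ with the identities $SI_eS=I$, $(SJS)_e=J$ and $(I_1I_2)_e=(I_1)_e(I_2)_e$ are the unital strongly graded specialization of that machinery (note $\langle J\rangle=SJS$ when $1\in S$), so what your route buys is brevity, while the paper's buys the nonunital, ideally-symmetric generality needed for its applications. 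Two small points you leave implicit but should record: that $I_e$ is $G$-invariant for a graded ideal $I$ (one line: $S_{x^{-1}}I_eS_x\subseteq I\cap S_e=I_e$), which is needed before $G$-primeness of $S_e$ can be applied to $A_e,B_e$, and that $I_1I_2$ is again a graded ideal so that (i) may be applied to it; both are routine and do not affect correctness.
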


In this article, we continue the investigation from \cite{lannstrom2025}, now focusing on nonunital analogues of Theorem~\ref{thm:mainordered}.
We prove that Theorem~\ref{thm:mainordered}(a) holds 
for \emph{all} nonunital rings (see Theorem~\ref{thm:Gorderedprimegradedprime}).
We also extend Theorem~\ref{thm:mainordered}(b) to rings \( S \) that are \emph{ideally symmetrically $G$-graded}, meaning that for every graded ideal $I$ of $S$ and every \( x \in G \), the equalities \( S_x S_{x^{-1}} I_x = I_x \) and \( I_x S_{x^{-1}} S_x = I_x \) hold (see Theorem~\ref{thm:GorderedGprime}).
This class strictly contains all nearly epsilon-strongly $G$-graded rings, and thus also all unital strongly $G$-graded rings (see Proposition~\ref{prop:RL}). 
Therefore, the results presented here apply 
 to a broader range of gradings than those considered in 
\cite{lannstrom2025}.

Here is a detailed outline of this article.
In Section~\ref{sec:gradedprimeideals}, we extend some
results on prime ideals to a nonunital setting. 
We introduce \emph{graded $m$-systems} to establish a primeness criterion
for a nonunital $S$ 
(see
Corollary~\ref{cor:msystem}
and
Theorem~\ref{thm:Pgradedprime}).
We also prove a nonunital 
version of Theorem~\ref{thm:mainordered}(a) (see 
Theorem~\ref{thm:Gorderedprimegradedprime}).
In Section~\ref{sec:Gprimeideals}, we
establish a one-to-one correspondence between 
$G$-invariant ideals of $S_e$ and graded ideals of $S$, in the case where $S$ is ideally symmetrically graded 
(see Proposition~\ref{prop:leftright}).
We also generalize 
Theorem~\ref{thm:mainordered}(b) to the class of ideally symmetrically graded rings
(see Theorem~\ref{thm:GorderedGprime}).
In Section~\ref{sec:LPA},
we use our results from Section~\ref{sec:Gprimeideals} 
to give a new short proof (see
Theorem~\ref{thm:shortLPA}) of a previously established  characterization of prime Leavitt path rings 
from \cite{lannstrom2025}. 
In Section~\ref{sec:primegradedsubringsofgrouprings},
we investigate primeness of graded subrings of 
group rings $R[G]$ for nonunital rings $R$.
In particular, we show that for any nontrivial 
ordered group $G$,
there are prime $G$-graded rings which are not 
nearly-epsilon strongly $G$-graded (see
Proposition~\ref{prop:RL}).

\section{Graded prime ideals}\label{sec:gradedprimeideals}

Throughout this section, \( S \) denotes an associative but not necessarily unital ring. 
Let $A$ be a subset of $S$. 
We let $\mathbb{Z}A$ denote the set of all finite sums of integer multiples of elements of $A$, and we let $SA$ denote the set of all finite sums of products of elements of $S$ with elements of $A$.
The sets $AS$ and $SAS$ are defined analogously.
The ideal of $S$ 
generated by $A$ is denoted by $\langle A \rangle$,
and it is defined as the intersection of all ideals 
of $S$ containing $A$. 
Note that $\langle A \rangle = \Z A + AS + SA + SAS$.

For the rest of this section, $S$ is equipped with a 
$G$-grading.
We let $h(S)$ denote the set $\cup_{g \in G} S_g$
of homogeneous elements of $S$.
For a nonzero $s\in h(S)$, we put $\deg(s):=g$ if $s \in S_g$.

\begin{prop}\label{prop:gradedidealgenerated}
Let $A$ be a subset of $h(S)$. Then 
$\langle A \rangle$ is a graded ideal of $S$.
\end{prop}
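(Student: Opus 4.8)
The plan is to show that $\langle A \rangle$, which by the remark preceding the proposition equals $\Z A + AS + SA + SAS$, is a homogeneous subset of $S$, i.e.\ that it decomposes as $\oplus_{x \in G}(\langle A \rangle \cap S_x)$. Since $\langle A \rangle$ is already known to be an ideal, it suffices to check that whenever an element of $\langle A \rangle$ is written in $S = \oplus_{x \in G} S_x$ as a sum of homogeneous components, each of those components again lies in $\langle A \rangle$.

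First I would reduce to generators of a single pure degree. Write $A = \cup_{g \in G} A_g$ where $A_g := A \cap S_g$. For each $g$, the generators in $A_g$ are homogeneous of degree $g$, so I will argue that $\langle A_g \rangle$ is graded, and then conclude that $\langle A \rangle = \sum_{g} \langle A_g \rangle$ is a sum of graded ideals, hence graded (a sum of graded ideals is graded, because the $S_x$-component of a finite sum of homogeneous elements is obtained by collecting the pieces of degree $x$). So the crux is the case $A \subseteq S_g$ for a fixed $g \in G$.

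In that case I examine each of the four summands $\Z A$, $AS$, $SA$, $SAS$ separately. An element $a \in A$ is homogeneous of degree $g$, so $\Z A \subseteq S_g$ is already homogeneous. For $AS$: a typical element is a finite sum $\sum_i a_i s_i$ with $a_i \in A$ and $s_i \in S$; decomposing each $s_i = \sum_{x} (s_i)_x$ into homogeneous components and using $a_i (s_i)_x \in S_g S_x \subseteq S_{gx}$, I see that the degree-$y$ component of $\sum_i a_i s_i$ equals $\sum_i a_i (s_i)_{g^{-1} y}$, which again lies in $AS$. The same bookkeeping handles $SA$ (degree-$y$ component of $\sum_i s_i a_i$ is $\sum_i (s_i)_{y g^{-1}} a_i \in SA$) and $SAS$ (degree-$y$ component of $\sum_i s_i a_i t_i$ is $\sum_{x \in G} \sum_i (s_i)_x a_i (t_i)_{x^{-1} g^{-1} y}$, a finite sum lying in $SAS$). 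Adding these four computations, the degree-$y$ component of any element of $\langle A_g \rangle$ lies again in $\langle A_g \rangle$, so $\langle A_g \rangle$ is graded.

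I do not anticipate a genuine obstacle here; the argument is a routine verification that the ideal generated by homogeneous elements is homogeneous. The only point requiring slight care is making the degree-bookkeeping precise when the ring is nonunital, since then one cannot absorb $\Z A$, $AS$, and $SA$ into $SAS$ and must track all four summands of $\langle A \rangle$ individually — but each is handled by the same elementary computation. A clean way to phrase the whole thing, avoiding case analysis, is to observe directly that $\langle A \rangle$ is spanned as an abelian group by elements of the form $s\, a\, t$ with $a \in h(S)$ and $s, t \in h(S) \cup \{1\}$ (formally adjoining a unit, or equivalently reading $sat$ as $at$, $sa$, or $a$ when a factor is absent), each of which is homogeneous; then for any $u \in \langle A \rangle$ the homogeneous components of $u$ are integer combinations of these homogeneous spanning elements and hence lie in $\langle A \rangle$, which is exactly the statement that $\langle A \rangle$ is graded.
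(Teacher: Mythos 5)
Your proposal is correct and is essentially the paper's own argument: the paper's proof is precisely your closing observation that $\langle A \rangle = \Z A + AS + SA + SAS$ is spanned over $\Z$ by homogeneous elements of the form $a$, $sa$, $at$, $sat$ with $a \in A$ and $s,t \in h(S)$, so each homogeneous component of an element of $\langle A \rangle$ again lies in $\langle A \rangle$. (Only a cosmetic index slip in your explicit $SAS$ bookkeeping: for nonabelian $G$ the degree-$y$ component involves $(t_i)_{g^{-1}x^{-1}y}$ rather than $(t_i)_{x^{-1}g^{-1}y}$; this does not affect the argument.)
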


\begin{proof}
Every element of 
$\langle A \rangle$ is a finite sum of homogeneous 
elements of the 
form $a$, $sa$, $at$ or $s a t$, for $a \in A \subseteq h(S)$ and $s,t \in h(S)$. Thus, $\langle A \rangle$ is graded.
\end{proof}

We say that a subset $T$ of a graded ring $T'$ is 
a \emph{graded $m$-system in $T'$}, if for all $a,b \in T \cap h(T')$, 
we have $ab\in T$ or there is some $s \in h(T')$ such that $asb \in T$. 
	
\begin{prop}\label{prop:gradedmsystem}
Let $P$ be a proper graded ideal of $S$.
The following assertions are equivalent:
\begin{enumerate}[{\rm (i)}]

\item The ideal $P$ is graded prime.

\item For all $a,b \in h(S)$, if $a S b \subseteq P$ and $ab \in P$,
then $a \in P$ or $b \in P$. 

\item The complement $S \setminus P$ is
a graded $m$-system in $S$.

\end{enumerate}
\end{prop}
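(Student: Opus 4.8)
The plan is to prove the cycle of implications $(i) \Rightarrow (ii) \Rightarrow (iii) \Rightarrow (i)$, which is the natural route since each adjacent pair is essentially a reformulation of the previous one at a lower level of generality.

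For $(i) \Rightarrow (ii)$: suppose $P$ is graded prime and take homogeneous $a, b$ with $aSb \subseteq P$ and $ab \in P$. I would consider the graded ideals $A := \langle a \rangle$ and $B := \langle b \rangle$, which are graded by Proposition~\ref{prop:gradedidealgenerated}. Using $\langle a \rangle = \Z a + aS + Sa + SaS$, I expand the product $AB$ and check term by term that each summand lies in $P$: the key inputs are $ab \in P$, $aSb \subseteq P$, and the fact that $P$ is an ideal (so anything of the form $PS$, $SP$, $SPS$, or $\Z P$ stays in $P$), together with associativity to shuffle factors so that an ``$ab$'' or ``$aSb$'' block appears in the middle. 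For instance, $(Sa)(bS) = S(ab)S \subseteq SPS \subseteq P$ and $(aS)(Sb) = a(SS)b \subseteq aSb \subseteq P$. Once $AB \subseteq P$ is established, graded primeness gives $a \in A \subseteq P$ or $b \in B \subseteq P$.

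For $(ii) \Rightarrow (iii)$: let $T := S \setminus P$ and take $a, b \in T \cap h(S)$; I must find $s \in h(S)$ with $asb \in T$, or show $ab \in T$ already. Arguing contrapositively, if $ab \in P$ and $asb \in P$ for every $s \in h(S)$, then since $P$ is graded and $aSb$ is spanned by the homogeneous elements $asb$ with $s \in h(S)$, we get $aSb \subseteq P$; now $(ii)$ forces $a \in P$ or $b \in P$, contradicting $a, b \in T$. Hence $T$ is a graded $m$-system.

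For $(iii) \Rightarrow (i)$: suppose $T := S \setminus P$ is a graded $m$-system and let $A, B$ be graded ideals with $AB \subseteq P$; assume for contradiction that $A \not\subseteq P$ and $B \not\subseteq P$. Since $A, B$ are graded, I can pick homogeneous $a \in A \setminus P$ and $b \in B \setminus P$, so $a, b \in T \cap h(S)$. By the $m$-system property, either $ab \in T$ or $asb \in T$ for some $s \in h(S)$; but $ab \in AB \subseteq P$ and $asb \in ASB \subseteq AB \subseteq P$ (using that $A$, $B$ are two-sided ideals, so $aSb \subseteq AB$), so in either case we land in $P = S \setminus T$, a contradiction. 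Therefore $A \subseteq P$ or $B \subseteq P$, i.e.\ $P$ is graded prime.

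The only mildly delicate point — and the step I would be most careful about — is the term-by-term verification in $(i) \Rightarrow (ii)$ that $AB \subseteq P$, since one must make sure every one of the roughly sixteen product-blocks arising from $(\Z a + aS + Sa + SaS)(\Z b + bS + Sb + SbS)$ is absorbed into $P$; the essential observations are that $a(SS)b \subseteq aSb$ and that any block containing $ab$ or $aSb$ flanked by elements of $S$ stays in $P$ because $P$ is an ideal. Everything else is a routine unwinding of definitions, with the gradedness hypotheses on $A$, $B$, and $P$ ensuring that ``homogeneous witnesses'' can always be extracted.
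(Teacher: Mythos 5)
Your proposal is correct and follows essentially the same route as the paper: the cycle (i)$\Rightarrow$(ii)$\Rightarrow$(iii)$\Rightarrow$(i), with (i)$\Rightarrow$(ii) via the expansion of $\langle a \rangle \langle b \rangle = (\Z a + aS + Sa + SaS)(\Z b + bS + Sb + SbS)$ and absorption of each block into $P$, and the same homogeneous-element extraction in the remaining two implications. No gaps.
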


\begin{proof}
(i)$\Rightarrow$(ii):
Suppose that $P$ is graded prime and 
$a,b \in h(S)$ satisfy $a S b \subseteq P$ and $ab\in P$. By 
Proposition~\ref{prop:gradedidealgenerated}, 
$\langle a \rangle$
and $\langle b \rangle$ are graded ideals of $S$. 
We get that
\begin{align*}
\langle a \rangle \langle b \rangle &= 
(\Z a + aS + Sa + SaS)
(\Z b + bS + Sb + SbS) \\
&=\Z a b + \Z a bS + \Z a Sb + \Z a SbS
+ \Z aS b + aS bS + aS Sb + aS SbS \\
&+ \Z Sa b + Sa bS + Sa Sb + Sa SbS
+ \Z SaS b + SaS bS + SaS Sb + SaS SbS \subseteq P.
\end{align*}

Since $P$ is graded prime it follows that
$\langle a \rangle \subseteq P$ or $\langle b \rangle 
\subseteq P$ which, in turn, implies that
$a \in P$ or $b \in P$.

(ii)$\Rightarrow$(iii):
Suppose that (ii) holds. Take $a,b \in h(S) \setminus P$.
From (ii) it follows that $ab \notin P$ or there is 
$s \in S$ with $a s b \notin P$.
In the latter case, there is an 
$s' \in h(S)$ such that $a s' b \notin P$. 

(iii)$\Rightarrow$(i):
Suppose that $S \setminus P$ is
a graded $m$-system in $S$. 
Let $A$ and $B$ be graded ideals of $S$
with $AB \subseteq P$. Seeking a contradiction,
suppose that $A \setminus P \neq \emptyset$ and 
$B \setminus P \neq \emptyset$. 
Every element in $A$ and $B$ 
is a finite sum of homogeneous elements in 
$A$ and $B$, respectively,
and hence there exist
$a \in h(A) \setminus P$ and 
$b \in h(B) \setminus P$. 
Since $S \setminus P$ is a graded $m$-system in $S$, 
we have $ab\notin P$ which is a contradiction, or that
there is an $s \in h(S)$ with $a s b \notin P$.
But then we get the contradiction
$S \setminus P \ni asb \in aSb \subseteq
\langle a \rangle \langle b \rangle
\subseteq AB \subseteq P$.
Therefore $A \subseteq P$ or $B \subseteq P$.
\end{proof}

If we restrict ourselves to trivial $G$-gradings on a unital ring $S$,
that is gradings where $S_g = \{ 0 \}$,
for $g \neq e$, then a graded $m$-system coincides with the
classical notion of an $m$-system (see \cite[Def. 10.3]{lam2001}).	From Proposition~\ref{prop:gradedmsystem}, we
therefore immediately get the following:

\begin{cor}\label{cor:msystem}
Let $P$ be a proper ideal of $S$.
The following assertions are equivalent:
\begin{enumerate}[{\rm (i)}]
\item The ideal $P$ is prime.
\item For all $a,b \in S$, if $a S b \subseteq P$ and $ab \in P$, then $a \in P$ or $b \in P$. 
\item The complement $S \setminus P$ is an $m$-system in $S$.
\end{enumerate}
\end{cor}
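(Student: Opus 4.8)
The plan is to deduce this corollary from Proposition~\ref{prop:gradedmsystem} by passing to a trivial grading, exactly as the preceding remark already indicates. First I would point out that none of the conditions (i)--(iii) mentions the grading on $S$, so there is no loss in replacing the given $G$-grading by the trivial one, in which $S_e = S$ and $S_g = \{0\}$ for every $g \in G \setminus \{e\}$. Relative to this grading one has $h(S) = S$, every ideal of $S$ is automatically graded, and Proposition~\ref{prop:gradedmsystem} applies verbatim.

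The next step is simply to read off what the three clauses of Proposition~\ref{prop:gradedmsystem} say in this situation. Because every ideal is graded, ``$P$ is graded prime'' becomes ``$P$ is prime'', so clause~(i) of the proposition is clause~(i) of the corollary. Because $h(S) = S$, clause~(ii) of the proposition, with $a,b$ ranging over $h(S)$, is clause~(ii) of the corollary, with $a,b$ ranging over $S$. Likewise, because $h(S) = S$, the condition defining a graded $m$-system in $S$ collapses to the condition defining an $m$-system in $S$, so clause~(iii) of the proposition is clause~(iii) of the corollary. Invoking Proposition~\ref{prop:gradedmsystem} then finishes the argument.

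Since the whole proof is this translation, I do not expect a genuine obstacle; the only subtlety worth flagging is the meaning of ``$m$-system'' for a ring that need not be unital. In the unital case one may always take the intermediate factor to be $1$, recovering Lam's classical definition and the coincidence noted in the remark; in general ``$m$-system'' must be understood as the specialization of ``graded $m$-system'', that is, for all $a,b$ in the set either $ab$ lies in the set or $asb$ lies in the set for some $s \in S$. With that reading fixed, the corollary is immediate.
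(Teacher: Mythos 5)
Your proposal is correct and follows the paper's own route: the paper likewise obtains the corollary immediately from Proposition~\ref{prop:gradedmsystem} by specializing to the trivial grading, under which every ideal is graded, $h(S)=S$, and a graded $m$-system is just an $m$-system. Your remark on how ``$m$-system'' should be read in the nonunital case (either $ab$ or some $asb$ lies in the set) is exactly the interpretation the corollary's statement requires, so nothing is missing.
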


\begin{thm}\label{thm:Pgradedprime}
Suppose that $G$ is an ordered group and that $S$ is a $G$-graded ring.
Let $P$ be a proper graded ideal of $S$.
Then $P$ is prime if and only if $P$ is graded prime.
\end{thm}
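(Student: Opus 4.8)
The plan is to prove the nontrivial implication --- that graded primeness of $P$ forces primeness --- since the converse is immediate (every graded ideal is an ideal, so prime trivially implies graded prime). By Corollary~\ref{cor:msystem}, it suffices to fix $a,b \in S$ with $aSb \subseteq P$ and $ab \in P$ and to deduce that $a \in P$ or $b \in P$. (Equivalently, one could argue that $S \setminus P$ is an $m$-system by exploiting that it is a graded $m$-system; the two formulations amount to the same computation.)

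I would argue by contradiction, assuming $a \notin P$ and $b \notin P$. Since $P$ is graded, the finite sets $\{\, g : a_g \notin P \,\}$ and $\{\, h : b_h \notin P \,\}$ are nonempty, hence, the order being total, they possess least elements $g_0$ and $h_0$. The heart of the argument is a ``lowest-degree coefficient'' estimate. After discarding the homogeneous components of $a$ and $b$ that already lie in $P$ --- which changes neither $aSb$ nor $ab$ modulo $P$ --- every surviving component of $a$ has degree $\geq g_0$ and every surviving component of $b$ has degree $\geq h_0$; then for any homogeneous $s \in S_k$ the order axiom $x \leq y \Rightarrow uxv \leq uyv$ yields $gkh \geq g_0 k h \geq g_0 k h_0$ for all such $g,h$, and cancellation in $G$ shows equality holds only when $g = g_0$ and $h = h_0$. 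Consequently the degree-$(g_0 k h_0)$ component of $asb$ equals $a_{g_0} s b_{h_0}$ modulo $P$; since $asb \in P$ and $P$ is graded, this gives $a_{g_0} s b_{h_0} \in P$ for every homogeneous $s$, hence $a_{g_0} S b_{h_0} \subseteq P$. Running the same estimate with $k = e$ and using $ab \in P$ in place of $asb$ yields $a_{g_0} b_{h_0} \in P$.

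Finally, $a_{g_0}$ and $b_{h_0}$ are homogeneous elements satisfying $a_{g_0} S b_{h_0} \subseteq P$ and $a_{g_0} b_{h_0} \in P$, so Proposition~\ref{prop:gradedmsystem}(ii) --- applicable because $P$ is graded prime --- forces $a_{g_0} \in P$ or $b_{h_0} \in P$, contradicting the choice of $g_0$ or $h_0$. Hence $a \in P$ or $b \in P$, and $P$ is prime.

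I expect the only genuine obstacle to be the nonunitality of $S$. In the classical (unital) argument one extracts the lowest coefficient of $ab$ directly, or passes to a localization; here there is no identity element, so from $a_{g_0} S b_{h_0} \subseteq P$ alone one cannot conclude $a_{g_0} b_{h_0} \in P$. This is precisely why the hypothesis $ab \in P$ --- and the matching clause in Corollary~\ref{cor:msystem}(ii) and Proposition~\ref{prop:gradedmsystem}(ii) --- is indispensable, and why the $k = e$ case must be treated separately. A minor point requiring care is that the relevant minimality must be taken among the components \emph{not} lying in $P$, so the reduction modulo $P$ should be carried out before comparing degrees.
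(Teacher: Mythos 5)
Your proposal is correct and takes essentially the same route as the paper's proof: both discard the components of $a$ and $b$ lying in $P$, extract an extremal-degree (you: lowest, the paper: highest) surviving homogeneous component from each, and combine the $m$-system criterion of Corollary~\ref{cor:msystem} with the graded criterion of Proposition~\ref{prop:gradedmsystem} to reach a contradiction. The only differences are cosmetic (lowest versus highest degree, and using clause (ii) rather than (iii) of Proposition~\ref{prop:gradedmsystem}), so no further comparison is needed.
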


\begin{proof}
The ``only if'' statement is trivial. Now we show the ``if''
statement. Suppose that $P$ is graded prime.
We claim that $S \setminus P$ is an $m$-system in $S$.
If we assume that the claim holds, then, from 
Corollary \ref{cor:msystem}, it follows that $P$ is prime.
Now we show the claim. Take $a,b \in S \setminus P$.
Let $p$ (resp. $q$) be the sum of the homogeneous components 
of $a$ (resp. $b$) that belong to $P$.
Put $d := a-p$ and $d' := b-q$. Note that all components
of $d$ and $d'$ belong to $S \setminus P$.
Since $G$ is ordered we can pick the nonzero components
$r$ and $r'$ of $d$ and $d'$, respectively, of highest degree.
Since $P$ is graded prime, it follows from
Proposition \ref{prop:gradedmsystem} that
$S \setminus P$ is a graded $m$-system in $S$.
Thus, there exists a nonzero $s \in h(S)$ such that 
$r s r' \notin P$ or $rr' \notin P$. 
Seeking a contradiction, suppose that $asb \in P$ and $ab\in P$.
From the equality
$a s b = (d + p)s(d' + q) = 
dsd' + p s d' + dsq + psq$
and the fact that $p s d',dsq,psq \in P$, 
it follows that $dsd' \in P$.
Similarly, $ab \in P$ leads to $dd' \in P$.
Since $P$ is graded, all components of $dsd'$ (resp. $dd'$) also
belong to $P$. However, the component of $dsd'$ (resp. $dd'$)
of highest degree is $r s r'$ (resp. $rr'$) which belongs to $S \setminus P$. This is a contradiction.
\end{proof}

Theorem~\ref{thm:Pgradedprime} immediately
implies:

\begin{thm}\label{thm:Gorderedprimegradedprime}
Suppose that $G$ is an ordered group and that $S$ is a $G$-graded ring. Then $S$ is prime
if and only if $S$ is graded prime.
\end{thm}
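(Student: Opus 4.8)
The plan is to deduce this statement directly from Theorem~\ref{thm:Pgradedprime} by specializing to the zero ideal. Recall that, by definition, $S$ is prime precisely when $\{0\}$ is a prime ideal of $S$, and $S$ is graded prime precisely when $\{0\}$ is a graded prime ideal of $S$. So it suffices to verify that $\{0\}$ lies within the scope of Theorem~\ref{thm:Pgradedprime} --- that is, that it is a proper graded ideal of $S$ --- and then read off the conclusion.

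First I would observe that $\{0\}$ is always a graded ideal: one has $\{0\} = \oplus_{x \in G} (\{0\} \cap S_x)$, since $\{0\} \cap S_x = \{0\}$ for every $x \in G$. Second, $\{0\}$ is a proper ideal of $S$ if and only if $S \neq \{0\}$. The degenerate case $S = \{0\}$ causes no difficulty: then $\{0\}$ fails to be proper, so $S$ is by convention neither prime nor graded prime, and the asserted equivalence holds vacuously. Hence we may assume $S \neq \{0\}$, in which case $\{0\}$ is a proper graded ideal of $S$.

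Applying Theorem~\ref{thm:Pgradedprime} with $P = \{0\}$ then yields that $\{0\}$ is prime if and only if $\{0\}$ is graded prime. Translating back through the definitions, $S$ is prime if and only if $S$ is graded prime, which is the claim. There is essentially no obstacle at this stage: all the substantive work --- in particular the use of the total order on $G$ to select and compare the highest-degree homogeneous components of elements outside $P$ --- was already carried out in the proof of Theorem~\ref{thm:Pgradedprime}. The only thing left to check here is the elementary bookkeeping that the zero ideal is graded and, in the nondegenerate case, proper.
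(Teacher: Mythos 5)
Your proposal is correct and matches the paper exactly: the paper derives Theorem~\ref{thm:Gorderedprimegradedprime} as an immediate consequence of Theorem~\ref{thm:Pgradedprime} applied to $P=\{0\}$, which is precisely what you do. Your extra bookkeeping (that $\{0\}$ is graded, and properness in the nondegenerate case) is fine and does not change the argument.
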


Note that 
Theorem \ref{thm:Gorderedprimegradedprime} 
is a partial generalization of 
\cite[Thm. 3.2]{abrams1993}.

\section{$G$-prime ideals}\label{sec:Gprimeideals}

Throughout this section, $S$ denotes an associative $G$-graded ring that is not
necessarily unital.
Let $J$ be an ideal of $S_e$. Recall 
from Section \ref{sec:introduction} that
$J$ is said to be $G$-invariant if
for each $x \in G$, the inclusion
$S_{x^{-1}} J S_x \subseteq J$ holds.
We define the map $( \cdot )_e : S \to S_e$, by $(\sum_{g\in G} s_g )_e := s_e$.
By abuse of notation, the parentheses in $(\cdot)_e$ will often be suppressed.

\begin{prop}\label{prop:JeJ}
Let $J$ be an ideal of $S_e$. Then $J$ is 
$G$-invariant if and only if $\langle J \rangle_e = J$.
\end{prop}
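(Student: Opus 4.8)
The plan is to prove the two implications separately, using the explicit formula $\langle J \rangle = \Z J + JS + SJ + SJS$ recalled at the start of Section~\ref{sec:gradedprimeideals}, together with the fact that $J \subseteq S_e$, so each of the generators of $\langle J \rangle$ can be decomposed into homogeneous pieces. First I would compute $\langle J \rangle_e$ directly: an element of $\langle J \rangle$ is a finite sum of terms of the form $n j$ (with $n \in \Z$, $j \in J$), $j s$, $s j$, and $s j t$ with $s, t \in S$; replacing $s, t$ by their homogeneous components, the only way such a term lands in $S_e$ is when the degrees multiply to $e$. Thus the $e$-component of a generic element of $\langle J \rangle$ is a finite sum of terms $n j$, $j s_e$, $s_e j$, and $s_{x^{-1}} j t_x$ for $x \in G$. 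Consequently
\[
\langle J \rangle_e = \Z J + J S_e + S_e J + \sum_{x \in G} S_{x^{-1}} J S_x,
\]
and since $J$ is an ideal of $S_e$ the first three summands are contained in $J$, giving $\langle J \rangle_e = J + \sum_{x \in G} S_{x^{-1}} J S_x$ (note $x = e$ recovers $S_e J S_e \subseteq J$, so $J$ itself is subsumed, but it is harmless to keep it).

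For the ``if'' direction, assume $\langle J \rangle_e = J$. Fix $x \in G$. Every element of $S_{x^{-1}} J S_x$ is homogeneous of degree $e$ and lies in $\langle J \rangle$, hence lies in $\langle J \rangle \cap S_e = \langle J \rangle_e = J$. Therefore $S_{x^{-1}} J S_x \subseteq J$ for all $x \in G$, which is exactly the statement that $J$ is $G$-invariant.

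For the ``only if'' direction, assume $J$ is $G$-invariant. The inclusion $J \subseteq \langle J \rangle_e$ is immediate since $J \subseteq \langle J \rangle$ and $J \subseteq S_e$. For the reverse inclusion, use the formula for $\langle J \rangle_e$ displayed above: $\Z J \subseteq J$ trivially, $J S_e \subseteq J$ and $S_e J \subseteq J$ because $J$ is an ideal of $S_e$, and $S_{x^{-1}} J S_x \subseteq J$ for every $x \in G$ by $G$-invariance. Hence $\langle J \rangle_e \subseteq J$, and equality follows. The only mild subtlety — and the closest thing to an obstacle — is the bookkeeping of homogeneous components in the computation of $\langle J \rangle_e$: one must be careful that $J$ sits inside $S_e$ so that a product $s_g j$ has degree $g$, and that taking the $e$-component commutes with finite sums, so that $\langle J \rangle_e$ really is the set of all finite sums of the listed degree-$e$ terms rather than something larger. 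Once that is set up cleanly, both directions are essentially one line each.
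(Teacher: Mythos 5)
Your proposal is correct and follows essentially the same route as the paper: both compute $\langle J \rangle_e = J + \sum_{x \in G} S_{x^{-1}} J S_x$ from the formula $\langle J \rangle = \Z J + JS + SJ + SJS$ and then read off the equivalence. You merely spell out the two implications that the paper leaves as an easy consequence.
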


\begin{proof}
Since $J \subseteq \langle J \rangle_e
= (J + JS + SJ + SJS)_e = 
J + JS_e + S_e J + 
\sum_{x \in G} S_{x^{-1}} J S_x = 
J + \sum_{x \in G} S_{x^{-1}} J S_x$,
we get that $J \subseteq \langle J \rangle_e = 
J + \sum_{x \in G} S_{x^{-1}} J S_x$,
from which it easily follows that
$J$ is $G$-invariant
if and only if $\langle J \rangle_e = J$.
\end{proof}

\begin{prop}\label{prop:Iegraded}
Let $I$ be a graded ideal of $S$. Then $I_e$ is a 
$G$-invariant ideal of $S_e$.
\end{prop}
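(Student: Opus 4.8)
The plan is to verify the two defining properties of a $G$-invariant ideal of $S_e$ directly. First I would check that $I_e$ is an ideal of $S_e$. Since $I$ is a graded ideal, $I_e = I \cap S_e$; the intersection of an ideal of $S$ with a subring $S_e$ is always an ideal of that subring (it is an additive subgroup, and $S_e I_e \subseteq S_e I \cap S_e \subseteq I \cap S_e = I_e$, and similarly on the right). This is the routine part.

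The substantive point is $G$-invariance, i.e.\ that $S_{x^{-1}} I_e S_x \subseteq I_e$ for every $x \in G$. For this I would argue as follows. Fix $x \in G$. For $s \in S_{x^{-1}}$, $a \in I_e$ and $t \in S_x$, the element $sat$ lies in $S_{x^{-1}} S_e S_x \subseteq S_{x^{-1} e x} = S_e$, so $sat$ is homogeneous of degree $e$. On the other hand, since $I$ is an ideal of $S$ and $a \in I_e \subseteq I$, we have $sat \in SIS \subseteq I$. Hence $sat \in I \cap S_e = I_e$. Taking finite sums, $S_{x^{-1}} I_e S_x \subseteq I_e$, as required.

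Finally I would note that $I_e$ is a proper subset of $S_e$ whenever it needs to be — but in fact the statement as phrased does not require properness (it only asserts that $I_e$ is a $G$-invariant ideal, not a $G$-prime one), so nothing further is needed. I do not anticipate any real obstacle here: the only thing to be careful about is keeping track of which products land in which homogeneous component, and the ordered-group hypothesis plays no role in this proposition (it is purely a consequence of the grading being compatible with multiplication together with $I$ being an ideal). The whole argument is a short index computation, and the proof in the paper is presumably one or two lines.
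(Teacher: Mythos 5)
Your proof is correct and is essentially the paper's argument: the paper verifies $G$-invariance by the single containment $S_{x^{-1}} I_e S_x \subseteq (S_{x^{-1}} I S_x) \cap (S_{x^{-1}} S_e S_x) \subseteq I \cap S_e = I_e$, which is exactly your elementwise computation written as a chain of set inclusions. The extra routine check that $I_e$ is an ideal of $S_e$ is fine and harmless.
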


\begin{proof}
Take $x \in G$. Then $S_{x^{-1}} I_e S_x \subseteq
(S_{x^{-1}} I S_x) \cap (S_{x^{-1}} S_e S_x)
\subseteq I \cap S_e = I_e$.
\end{proof}

\begin{defi}
Let $I$ be a graded ideal of $S$.  
We say that $I$ is 
\emph{identity generated} if $I = \langle I_e \rangle$.
\end{defi}

\begin{prop}\label{prop:bijection1}
Let $S$ be a $G$-graded ring.
The map $\langle \cdot \rangle$ is an inclusion
preserving bijection 
$\{ \mbox{$G$-invariant ideals of $S_e$} \}
\to 
\{ \mbox{identity generated $G$-graded ideals of $S$} \}$
with inverse $(\cdot)_e$.
\end{prop}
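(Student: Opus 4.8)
The plan is to verify that the two maps $\langle\cdot\rangle$ and $(\cdot)_e$ are mutually inverse bijections between the two indicated sets, and that both preserve inclusions. First I would check that the maps land in the correct sets. If $J$ is a $G$-invariant ideal of $S_e$, then $\langle J\rangle$ is a graded ideal of $S$ by Proposition~\ref{prop:gradedidealgenerated} (since $J\subseteq h(S)$), and by definition it is identity generated provided $\langle J\rangle_e=J$; but that last equality is exactly Proposition~\ref{prop:JeJ}. Conversely, if $I$ is an identity generated $G$-graded ideal of $S$, then $I_e$ is a $G$-invariant ideal of $S_e$ by Proposition~\ref{prop:Iegraded}. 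So both maps are well defined between the stated sets.

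Next I would check that the two composites are identities. For $J$ a $G$-invariant ideal of $S_e$, the composite $(\langle J\rangle)_e = J$ is, again, precisely Proposition~\ref{prop:JeJ}. For $I$ an identity generated $G$-graded ideal, the composite $\langle I_e\rangle = I$ holds by the very definition of "identity generated". Thus the two maps are inverse to one another.

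Finally I would note that inclusion preservation is immediate for both maps: if $J\subseteq J'$ are $G$-invariant ideals of $S_e$, then $J\subseteq J'\subseteq \langle J'\rangle$, so $\langle J\rangle\subseteq\langle J'\rangle$; and if $I\subseteq I'$ are graded ideals, then clearly $I_e = I\cap S_e\subseteq I'\cap S_e = I'_e$. (One also gets that the bijection reflects inclusions, since a bijection whose inverse is inclusion preserving is automatically an order isomorphism.) I do not expect any genuine obstacle here: the proposition is essentially a bookkeeping statement assembling Propositions~\ref{prop:gradedidealgenerated}, \ref{prop:JeJ}, and \ref{prop:Iegraded}, and the only point requiring a moment's care is confirming that "identity generated" is precisely the condition that makes $\langle\cdot\rangle$ surjective onto the second set and that makes $\langle I_e\rangle=I$ hold on the nose.
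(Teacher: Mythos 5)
Your proposal is correct and follows essentially the same route as the paper: both arguments assemble Propositions~\ref{prop:gradedidealgenerated}, \ref{prop:JeJ} and \ref{prop:Iegraded} to show the maps are well defined and mutually inverse, with inclusion preservation being immediate. The only (correctly handled) point of care is that $\langle J\rangle_e=J$ indeed forces $\langle\langle J\rangle_e\rangle=\langle J\rangle$, so $\langle J\rangle$ is identity generated.
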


\begin{proof}
Clearly the maps $\langle \cdot \rangle$ and 
$(\cdot)_e$ are inclusion preserving.
By Propositions~\ref{prop:gradedidealgenerated} and \ref{prop:JeJ}, it follows that
$\langle \cdot \rangle$ is well defined and 
$(\cdot)_e \circ \langle \cdot \rangle = {\rm id}$.
Proposition \ref{prop:Iegraded} implies that
$(\cdot)_e$ is well defined.
From the definition of identity generated graded ideals 
it follows that $\langle \cdot \rangle \circ (\cdot)_e
= {\rm id}$.
\end{proof}

Recall that $S$ is said to be \emph{symmetrically $G$-graded} if $S_x S_{x^{-1}} S_x = S_x$ for every $x \in G$.

\begin{defi}\label{def:ideallysymmetricallygraded}
We say that $S$ is \emph{ideally symmetrically $G$-graded},
if for every graded ideal $I$ of $S$ and 
every $x \in G$
the equalities $S_x S_{x^{-1}} I_x = I_x$ and $I_x S_{x^{-1}} S_x = I_x$ hold.
\end{defi}

\begin{prop}\label{prop:leftideallysymm}
The following assertions hold:
\begin{enumerate}[{\rm (a)}]

\item If $S$ is ideally symmetrically $G$-graded,
then $S$ is symmetrically $G$-graded.

\item If $S$ is nearly epsilon-strongly $G$-graded,
then $S$ is ideally symmetrically $G$-graded.

\end{enumerate}
\end{prop}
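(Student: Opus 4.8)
The plan is to prove the two implications of Proposition~\ref{prop:leftideallysymm} separately, both by reducing to already-established definitions and working with homogeneous generators of the graded ideals involved.

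For part (a), I would take a $G$-graded ring $S$ that is ideally symmetrically $G$-graded and show it is symmetrically $G$-graded, i.e. that $S_x S_{x^{-1}} S_x = S_x$ for every $x \in G$. The key observation is that $S$ itself is a graded ideal of $S$, and clearly $S_x = S \cap S_x$ is its degree-$x$ component. Applying the defining equalities of Definition~\ref{def:ideallysymmetricallygraded} with $I = S$ immediately yields $S_x S_{x^{-1}} S_x = S_x S_{x^{-1}} I_x = I_x = S_x$, so there is essentially nothing more to do here.

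For part (b), I would assume $S$ is nearly epsilon-strongly $G$-graded, recall from the introduction that this means for each $x \in G$ and each $s \in S_x$ there exist $\epsilon_x(s) \in S_x S_{x^{-1}}$ and $\epsilon_x'(s) \in S_{x^{-1}} S_x$ with $\epsilon_x(s) s = s = s \epsilon_x'(s)$, and then verify both equalities in Definition~\ref{def:ideallysymmetricallygraded} for an arbitrary graded ideal $I$ of $S$ and arbitrary $x \in G$. One inclusion is automatic from the grading: $S_x S_{x^{-1}} I_x \subseteq S_x S_{x^{-1}} S_x \cap S_x \ldots$ — more directly, $S_x S_{x^{-1}} I_x \subseteq I$ since $I$ is an ideal, and it lands in $S_x$ by the grading, so $S_x S_{x^{-1}} I_x \subseteq I_x$; similarly $I_x S_{x^{-1}} S_x \subseteq I_x$. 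For the reverse inclusions, I would take an arbitrary $s \in I_x \subseteq S_x$; since $s \in S_x$, there is $\epsilon_x(s) \in S_x S_{x^{-1}}$ with $s = \epsilon_x(s) s$, hence $s \in S_x S_{x^{-1}} s \subseteq S_x S_{x^{-1}} I_x$, giving $I_x \subseteq S_x S_{x^{-1}} I_x$; the other equality follows symmetrically using $\epsilon_x'(s)$.

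I do not anticipate a serious obstacle: both statements are ``unwinding the definitions'' arguments. The only mild subtlety is being careful that the witnesses $\epsilon_x, \epsilon_x'$ in the near epsilon-strongness condition are allowed to depend on the element $s$, which is exactly what the definition in the introduction provides, so this causes no trouble.
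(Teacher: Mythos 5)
Your proposal is correct and follows essentially the same route as the paper: part (a) by taking $I = S$ in Definition~\ref{def:ideallysymmetricallygraded}, and part (b) by noting the easy inclusions $S_x S_{x^{-1}} I_x \subseteq I_x$ and $I_x S_{x^{-1}} S_x \subseteq I_x$ and then using the elements $\epsilon_x$, $\epsilon_x'$ to absorb each $s \in I_x$ into $S_x S_{x^{-1}} I_x$ and $I_x S_{x^{-1}} S_x$. Your remark that the witnesses may depend on $s$ is exactly how the definition is used in the paper as well.
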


\begin{proof}
(a) Take $I = S$ in 
Definition~\ref{def:ideallysymmetricallygraded}.

(b) Suppose that $S$ is nearly epsilon-strongly $G$-graded. Take $x \in G$ and a graded ideal $I$ of $S$.
Then $S_x S_{x^{-1}} I_x \subseteq S_e I_x \subseteq I_x$ and $I_x S_{x^{-1}} S_x \subseteq I_x S_e \subseteq I_x$.
Now we show the reversed inclusions. Take $s \in I_x$. By the definition of nearly
epsilon-strongly $G$-graded rings 
(see Section \ref{sec:introduction}),
there are $\epsilon_x \in S_x S_{x^{-1}}$
and $\epsilon_x' \in S_{x^{-1}} S_x$ such that
$s = \epsilon_x s = s \epsilon_x'$.
Thus, $s = \epsilon_x s \in S_x S_{x^{-1}} I_x$ 
and
$s = s \epsilon_x' \in I_x S_{x^{-1}} S_x$. This shows that $S_x S_{x^{-1}}I_x=I_x=I_x S_{x^{-1}}S_x$.
\end{proof}

\begin{rem}\label{rem:symmbutnotsymm}
There exist rings which are symmetrically graded but not ideally symmetrically graded (see Proposition \ref{prop:example}).
\end{rem}

\begin{prop}\label{prop:ideallygraded}
Suppose that $S$ is ideally symmetrically $G$-graded.
Let $I$ be a graded 
ideal of $S$. 
Then $I = S I_e = \langle I_e \rangle$
and
$I = I_e S = \langle I_e \rangle$.
\end{prop}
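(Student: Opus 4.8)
The plan is to prove the chain of equalities $I = SI_e = \langle I_e \rangle$ (the argument for $I = I_e S = \langle I_e \rangle$ being entirely symmetric, swapping left and right multiplications). First I would observe the easy inclusions: since $I_e \subseteq I$ and $I$ is an ideal, $SI_e \subseteq I$, and since $I_e \subseteq I$ with $I$ an ideal we also have $\langle I_e \rangle \subseteq I$. Moreover $SI_e \subseteq S I \subseteq \langle I_e \rangle$ is false in general, so instead I would note $SI_e \subseteq \langle I_e \rangle$ directly because $\langle I_e \rangle = \Z I_e + I_e S + S I_e + S I_e S \supseteq S I_e$. So it remains to show $I \subseteq SI_e$; this is the crux.

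The key step is to decompose a homogeneous element of $I$ using the ideal-symmetric hypothesis. Take $x \in G$ and $s \in I_x$. Since $S$ is ideally symmetrically $G$-graded, Definition~\ref{def:ideallysymmetricallygraded} gives $S_x S_{x^{-1}} I_x = I_x$, so $s \in S_x S_{x^{-1}} I_x$. Now here is the point: $S_{x^{-1}} I_x \subseteq S_{x^{-1}} I \cap S_{x^{-1}} S_x \subseteq I \cap S_e = I_e$, because $I$ is an ideal (so $S_{x^{-1}} I \subseteq I$) and $S_{x^{-1}} I_x$ lands in degree $e$. Hence $s \in S_x I_e \subseteq S I_e$. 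Since $I$ is graded, $I = \oplus_{x \in G} I_x$, and every element of $I$ is a finite sum of such homogeneous pieces, so $I \subseteq S I_e$. Combined with the reverse inclusion, $I = S I_e$, and then $I = SI_e \subseteq \langle I_e \rangle \subseteq I$ forces $I = \langle I_e \rangle$ as well.

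I expect the main (and only real) obstacle to be the degree-bookkeeping in the step $S_{x^{-1}} I_x \subseteq I_e$: one must be careful that $S_{x^{-1}} I_x$ is genuinely homogeneous of degree $e$ (which follows from $S_{x^{-1}} I_x \subseteq S_{x^{-1}} S_x \subseteq S_e$) and simultaneously contained in $I$ (which follows from $I$ being a two-sided ideal). Once that is in hand, the proof is a short two-line verification per inclusion. Everything else is routine manipulation of the explicit description $\langle A \rangle = \Z A + AS + SA + SAS$ recorded at the start of Section~\ref{sec:gradedprimeideals}, together with the fact that a graded ideal is the direct sum of its homogeneous components.

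\begin{proof}
We prove $I = S I_e = \langle I_e \rangle$; the proof that $I = I_e S = \langle I_e \rangle$ is analogous, interchanging the roles of left and right multiplication and using $I_x S_{x^{-1}} S_x = I_x$.

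First, $S I_e \subseteq S I \subseteq I$ since $I$ is an ideal and $I_e \subseteq I$. Also $\langle I_e \rangle \subseteq I$ since $I$ is an ideal containing $I_e$. Finally, $S I_e \subseteq \Z I_e + I_e S + S I_e + S I_e S = \langle I_e \rangle$.

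It remains to prove $I \subseteq S I_e$. Since $I$ is graded, $I = \oplus_{x \in G} I_x$, so it suffices to show $I_x \subseteq S I_e$ for every $x \in G$. Fix $x \in G$ and $s \in I_x$. Since $S$ is ideally symmetrically $G$-graded, $S_x S_{x^{-1}} I_x = I_x$, so $s \in S_x S_{x^{-1}} I_x$. Now $S_{x^{-1}} I_x \subseteq S_{x^{-1}} S_x \subseteq S_e$ and $S_{x^{-1}} I_x \subseteq S_{x^{-1}} I \subseteq I$, whence $S_{x^{-1}} I_x \subseteq I \cap S_e = I_e$. Therefore $s \in S_x I_e \subseteq S I_e$. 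This shows $I_x \subseteq S I_e$, and hence $I \subseteq S I_e$.

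Combining the inclusions, $I = S I_e$, and since $S I_e \subseteq \langle I_e \rangle \subseteq I$, we also get $I = \langle I_e \rangle$.
\end{proof}
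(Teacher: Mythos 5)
Your proof is correct and follows essentially the same route as the paper: both arguments hinge on writing $I_x = S_x S_{x^{-1}} I_x$ and observing that $S_{x^{-1}} I_x \subseteq S_e \cap I = I_e$, then sandwiching $S I_e$ between $I$ and $\langle I_e \rangle$. No issues.
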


\begin{proof}
Note 
that 
$I = \sum_{x \in G} I_x = 
\sum_{x \in G} S_x S_{x^{-1}} I_x \subseteq 
\sum_{x \in G} S_x I_e \subseteq S I_e  \subseteq SI \subseteq I$
since $S_{x^{-1}} I_x \subseteq S_e \cap I = I_e$ 
for $x \in G$. 
Thus, 
$I = S I_e \subseteq I_e + S I_e + I_e S + S I_e S = \langle I_e \rangle \subseteq I$
which establishes the first set of equalities. The second set of equalities are shown analogously.
\end{proof}

\begin{prop}\label{prop:leftright}
Let $S$ be  
ideally symmetrically $G$-graded.
Then $\langle \cdot \rangle$ is an inclusion
preserving bijection 
$\{ \mbox{$G$-invariant ideals of $S_e$} \} 
\to
\{ \mbox{$G$-graded ideals of $S$} \}$
with inverse $(\cdot)_e$.
\end{prop}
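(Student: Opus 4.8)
The plan is to leverage the already-established machinery, namely Proposition~\ref{prop:bijection1}, and reduce the statement to showing that under the ideally symmetrically graded hypothesis, every $G$-graded ideal of $S$ is automatically identity generated. Once that is done, the set of identity generated $G$-graded ideals coincides with the set of all $G$-graded ideals, and the bijection of Proposition~\ref{prop:bijection1} becomes precisely the bijection claimed here, with the same maps $\langle\cdot\rangle$ and $(\cdot)_e$ and the same inclusion-preserving property.

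First I would recall that, by Proposition~\ref{prop:bijection1}, the map $\langle\cdot\rangle$ already restricts to an inclusion-preserving bijection from the $G$-invariant ideals of $S_e$ onto the identity generated $G$-graded ideals of $S$, with inverse $(\cdot)_e$. So it suffices to verify that, when $S$ is ideally symmetrically $G$-graded, the class of identity generated $G$-graded ideals is the same as the class of all $G$-graded ideals. The inclusion ``identity generated $\Rightarrow$ $G$-graded'' is immediate (an identity generated ideal is $\langle I_e\rangle$, which is graded by Proposition~\ref{prop:gradedidealgenerated} since $I_e \subseteq h(S)$). For the reverse inclusion, let $I$ be an arbitrary $G$-graded ideal of $S$; then Proposition~\ref{prop:ideallygraded} gives $I = \langle I_e\rangle$, i.e. $I$ is identity generated. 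This settles the set-theoretic identification of the codomains.

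With that identification in hand, the statement follows directly: the map $\langle\cdot\rangle$ of Proposition~\ref{prop:bijection1}, now viewed as a map onto all $G$-graded ideals of $S$, is still a well-defined bijection (its target set has merely been renamed, not enlarged or shrunk), still inclusion preserving, and still has inverse $(\cdot)_e$. I would also double-check that $(\cdot)_e$ indeed sends every $G$-graded ideal of $S$ to a $G$-invariant ideal of $S_e$ — this is exactly Proposition~\ref{prop:Iegraded} — so the inverse map is defined on the full (renamed) codomain, not merely on the image.

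I do not anticipate a serious obstacle here, since all the substantive work has been distributed into the preceding propositions; the only thing to be careful about is not to re-prove those lemmas but to cite them precisely. The one point deserving a sentence of justification is why ``$G$-graded ideal'' and ``identity generated $G$-graded ideal'' genuinely coincide under the hypothesis — this rests entirely on Proposition~\ref{prop:ideallygraded}, whose proof in turn used the defining equalities $S_x S_{x^{-1}} I_x = I_x$ for graded ideals $I$. So the proof is essentially: invoke Proposition~\ref{prop:ideallygraded} to get $I = \langle I_e\rangle$ for every graded ideal $I$, conclude that the codomain of Proposition~\ref{prop:bijection1} is all of $\{\text{$G$-graded ideals of $S$}\}$, and transport the bijection accordingly.
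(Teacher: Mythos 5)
Your proposal is correct and follows essentially the same route as the paper, which also derives the statement by combining Proposition~\ref{prop:bijection1} with Proposition~\ref{prop:ideallygraded} to see that every $G$-graded ideal is identity generated. Your write-up simply spells out the codomain identification in more detail than the paper's one-line proof.
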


\begin{proof}
This follows from 
Proposition \ref{prop:bijection1}
and Proposition \ref{prop:ideallygraded}.
\end{proof}

\begin{prop}\label{prop:primeA}
Suppose that $S$ is  
ideally symmetrically $G$-graded.
Let $I$ be a graded ideal of $S$ such that 
$I_e$ is a $G$-prime ideal of $S_e$. 
Then $I$ is graded prime.
\end{prop}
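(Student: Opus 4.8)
The plan is to reduce the graded-primeness of $I$ to the $G$-primeness of $I_e$ by passing through the bijection of Proposition~\ref{prop:leftright}. Suppose $A$ and $B$ are graded ideals of $S$ with $AB \subseteq I$. Applying $(\cdot)_e$ and using that $(\cdot)_e$ is inclusion preserving, we get $(AB)_e \subseteq I_e$. The key observation is that $A_e B_e \subseteq (AB)_e$: indeed, a product of an element of $A_e \subseteq S_e$ with an element of $B_e \subseteq S_e$ lands in $S_e$, and it clearly lies in $AB$, so it lies in $(AB)_e$. Hence $A_e B_e \subseteq I_e$. By Proposition~\ref{prop:Iegraded}, $A_e$ and $B_e$ are $G$-invariant ideals of $S_e$. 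Since $I_e$ is $G$-prime, we conclude $A_e \subseteq I_e$ or $B_e \subseteq I_e$.

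Next I would apply $\langle \cdot \rangle$ to transfer this conclusion back up to $S$. Say $A_e \subseteq I_e$; then $\langle A_e \rangle \subseteq \langle I_e \rangle$ since $\langle \cdot \rangle$ is inclusion preserving (Proposition~\ref{prop:leftright}, or directly from the definition of $\langle \cdot \rangle$). Now, because $S$ is ideally symmetrically $G$-graded, Proposition~\ref{prop:ideallygraded} gives $A = \langle A_e \rangle$ for the graded ideal $A$, and likewise $I = \langle I_e \rangle$. Therefore $A \subseteq I$. Symmetrically, if $B_e \subseteq I_e$ then $B \subseteq I$. This shows that $AB \subseteq I$ implies $A \subseteq I$ or $B \subseteq I$ for all graded ideals $A, B$ of $S$, i.e., $I$ is graded prime. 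One should also note that $I$ is proper: since $I_e$ is a $G$-prime ideal it is by definition a proper ideal of $S_e$, so $I_e \neq S_e$, and as $I = \langle I_e \rangle$ with $(\cdot)_e \circ \langle \cdot \rangle = \mathrm{id}$ (Proposition~\ref{prop:bijection1}), we cannot have $I = S$, for that would force $I_e = S_e$.

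I do not expect any serious obstacle here; the argument is essentially a diagram chase through the bijection of Proposition~\ref{prop:leftright}, together with the two elementary facts $A_e B_e \subseteq (AB)_e$ and $(AB)_e \subseteq I_e$. The one point that deserves a line of care is the inclusion $A_e B_e \subseteq (AB)_e$: this is where the homogeneous structure is genuinely used, since the $e$-component of a product of homogeneous elements $s_x t_y$ is nonzero only when $xy = e$, but the specific sub-claim we need — that $S_e \cdot S_e$ stays in the $e$-component — is immediate. The real content of the proposition is already packaged in Proposition~\ref{prop:ideallygraded} (every graded ideal is recovered as $\langle I_e \rangle$) and Proposition~\ref{prop:Iegraded} ($I_e$ is $G$-invariant), so the proof is short.
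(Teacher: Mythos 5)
Your proof is correct and follows essentially the same route as the paper's: pass to $e$-components to get $A_e B_e \subseteq I_e$, invoke Proposition~\ref{prop:Iegraded} for $G$-invariance, use $G$-primeness of $I_e$, and lift back via $\langle \cdot \rangle$ and Proposition~\ref{prop:ideallygraded}. Your extra remark on properness of $I$ is a small point the paper leaves implicit, but there is no substantive difference.
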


\begin{proof}
Let $A$ and $B$ be graded ideals of $S$ with
$AB \subseteq I$. Then $A_e B_e \subseteq AB \cap S_e
\subseteq I \cap S_e = I_e$. By Proposition \ref{prop:Iegraded},
$A_e$ and $B_e$ are $G$-invariant.
Since $I_e$ is $G$-prime, we have $A_e \subseteq I_e$
or $B_e \subseteq I_e$. By Proposition \ref{prop:ideallygraded}
and Proposition \ref{prop:leftright},
$A = \langle A_e \rangle \subseteq 
\langle I_e \rangle = I$ or 
$B = \langle B_e \rangle \subseteq \langle I_e \rangle = I$. 
\end{proof}

\begin{prop}\label{prop:primeB}
Suppose that $S$ is  
ideally symmetrically $G$-graded.
Let $I$ be an ideal of $S$ which is graded prime. 
Then $I_e$ is $G$-prime.
\end{prop}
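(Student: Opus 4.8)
The plan is to verify that $I_e$ is proper and $G$-invariant, and then to check the defining implication for $G$-primeness by pulling $G$-invariant ideals of $S_e$ up to graded ideals of $S$ via $\langle \cdot \rangle$ and applying graded primeness of $I$ there. First I would observe that $I_e$ is $G$-invariant by Proposition~\ref{prop:Iegraded}, and that $I_e$ is proper: if $I_e = S_e$ then $\langle I_e \rangle = S$, but $\langle I_e \rangle = I$ by Proposition~\ref{prop:ideallygraded} (using that $S$ is ideally symmetrically $G$-graded), contradicting that $I$ is proper (graded primeness requires $I$ to be a proper graded ideal).

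Next I would take $G$-invariant ideals $A, B$ of $S_e$ with $AB \subseteq I_e$ and aim to conclude $A \subseteq I_e$ or $B \subseteq I_e$. Set $\tilde A := \langle A \rangle$ and $\tilde B := \langle B \rangle$, which are graded ideals of $S$ by Proposition~\ref{prop:gradedidealgenerated}, with $\tilde A_e = A$ and $\tilde B_e = B$ by Propositions~\ref{prop:JeJ} and \ref{prop:leftright}. The key computation is to show $\tilde A \tilde B \subseteq I$. Using Proposition~\ref{prop:ideallygraded} we may write $\tilde A = \tilde A_e S = A S$ and $\tilde B = S \tilde B_e = S B$, so $\tilde A \tilde B = A S S B \subseteq A S B$. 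To land inside $I$, I would pick out a homogeneous slice: for $x \in G$, the degree-$e$ part of $A S_x S_{x^{-1}} B$ is contained in $A (S_x S_{x^{-1}}) B \subseteq A S_e B$; more directly, since $S_x S_{x^{-1}} \subseteq S_e$ and $A, B$ are $G$-invariant ideals of $S_e$, any homogeneous product $a s_x t_{x^{-1}} b$ with $a \in A$, $b \in B$, $s_x \in S_x$, $t_{x^{-1}} \in S_{x^{-1}}$ lies in $A S_e B \subseteq A S_e \cdot S_e B$; and more to the point, $a s_x \in A S_x$ and $t_{x^{-1}} b \in S_{x^{-1}} B$, so $a s_x t_{x^{-1}} b \in (A S_x)(S_{x^{-1}} B)$, which has degree $e$, hence lies in $A_e$-type slices. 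The cleanest route: $\tilde A \tilde B$ is graded, so $\tilde A \tilde B \subseteq I$ iff $(\tilde A \tilde B)_e \subseteq I_e$; and $(\tilde A \tilde B)_e = ((AS)(SB))_e \subseteq \sum_{x \in G} (A S_x)(S_{x^{-1}} B)$. For each $x$, $G$-invariance of $A$ gives $(A S_x)(S_{x^{-1}} B) = A (S_x \cdot S_{x^{-1}}) B \subseteq A S_e B \subseteq A B \subseteq I_e$ — wait, this needs $S_x S_{x^{-1}} \subseteq S_e$ only, together with $A B \subseteq I_e$, so in fact $A S_e B$ need not equal $AB$; however $A S_e \subseteq A$ since $A$ is an ideal of $S_e$, hence $A S_e B \subseteq A B \subseteq I_e$. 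Thus $(\tilde A \tilde B)_e \subseteq I_e$, whence $\tilde A \tilde B \subseteq I$.

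Finally, since $I$ is graded prime, $\tilde A \tilde B \subseteq I$ forces $\tilde A \subseteq I$ or $\tilde B \subseteq I$; applying $(\cdot)_e$ and using $\tilde A_e = A$, $\tilde B_e = B$, $I_e = I_e$ yields $A \subseteq I_e$ or $B \subseteq I_e$, as required. I expect the main obstacle to be the bookkeeping in the inclusion $\tilde A \tilde B \subseteq I$: one must be careful that the reductions $\tilde A = AS$ and $\tilde B = SB$ (from Proposition~\ref{prop:ideallygraded}) are used on the correct sides, and that passing to the degree-$e$ component is legitimate because $\tilde A \tilde B$ is graded and $I$ is graded. Once the identity $(\tilde A \tilde B)_e \subseteq A S_e B \subseteq A B \subseteq I_e$ is in place, the rest is a direct transport along the bijection of Proposition~\ref{prop:leftright}.
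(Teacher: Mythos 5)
Your proposal is correct and follows essentially the same route as the paper: lift $A,B$ to the graded ideals $\langle A\rangle,\langle B\rangle$, show their product lies in $I$, apply graded primeness of $I$, and project back with $(\cdot)_e$ via Propositions~\ref{prop:JeJ} and~\ref{prop:leftright}. The only difference is a choice of sides: the paper writes $\langle A\rangle = SA$ and $\langle B\rangle = BS$ (the opposite one-sided decompositions from yours), so that $\langle A\rangle\langle B\rangle = S(AB)S \subseteq \langle I_e\rangle = I$ is immediate, avoiding your (valid but longer) detour through the degree-$e$ component of $(AS)(SB)$.
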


\begin{proof}
Let $J$ and $K$ be $G$-invariant ideals of $S_e$
with $JK \subseteq I_e$. From Propositions~\ref{prop:gradedidealgenerated}, \ref{prop:JeJ} and \ref{prop:ideallygraded}, it follows that
$\langle J \rangle \langle K \rangle = 
SJ KS \subseteq \langle JK \rangle \subseteq \langle I_e \rangle = I.$
By graded primeness of $I$, we have 
$\langle J \rangle \subseteq I$ or
$\langle K \rangle \subseteq I$.
Proposition \ref{prop:leftright} now implies that
$J = \langle J \rangle_e \subseteq I_e$ or
$K = \langle K \rangle_e \subseteq I_e$.
Thus $I_e$ is $G$-prime.
\end{proof}

\begin{prop}\label{prop:leftrightprime}
Let $S$ be 
ideally symmetrically $G$-graded.
Then $\langle \cdot \rangle$ is an inclusion
preserving bijection
$\{ \mbox{$G$-prime ideals of $S_e$} \} 
\to
\{ \mbox{$G$-graded prime ideals of $S$} \}$
with inverse $(\cdot)_e$.
\end{prop}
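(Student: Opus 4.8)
The plan is to assemble this final statement almost entirely from the two preceding propositions, which already do all the real work. By Proposition~\ref{prop:leftright}, the map $\langle \cdot \rangle$ is an inclusion-preserving bijection from $G$-invariant ideals of $S_e$ to $G$-graded ideals of $S$, with inverse $(\cdot)_e$. Thus the only thing left to check is that this bijection restricts correctly to the indicated subsets: namely, that $\langle \cdot \rangle$ sends $G$-prime ideals of $S_e$ into $G$-graded prime ideals of $S$, and that $(\cdot)_e$ sends $G$-graded prime ideals of $S$ into $G$-prime ideals of $S_e$.

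First I would verify the forward direction. Let $J$ be a $G$-prime ideal of $S_e$. By Proposition~\ref{prop:bijection1} (or Proposition~\ref{prop:leftright}), $\langle J \rangle$ is a $G$-graded ideal of $S$ with $\langle J \rangle_e = J$, so $\langle J \rangle_e$ is $G$-prime by hypothesis; Proposition~\ref{prop:primeA} then gives that $\langle J \rangle$ is graded prime. One should also note properness: since $J$ is a proper ideal of $S_e$ and $\langle J \rangle_e = J$, the ideal $\langle J \rangle$ is proper (it cannot be all of $S$, else its $e$-component would be all of $S_e$). Conversely, let $P$ be a $G$-graded prime ideal of $S$. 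Proposition~\ref{prop:primeB} gives that $P_e$ is $G$-prime, and properness of $P_e$ again follows from properness of $P$ via $\langle P_e \rangle = P$ (Proposition~\ref{prop:ideallygraded}).

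With both restrictions established, the bijection of Proposition~\ref{prop:leftright} restricts to a bijection between the two subsets, and it remains inclusion-preserving because it is a restriction of an inclusion-preserving map whose inverse is also inclusion-preserving. So the whole proof is essentially one sentence invoking Proposition~\ref{prop:leftright}, Proposition~\ref{prop:primeA}, and Proposition~\ref{prop:primeB}, plus a remark handling properness.

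I do not expect any genuine obstacle here; the statement is a bookkeeping corollary. The only point requiring a little care is the properness condition in the definitions of $G$-prime and graded prime ideals: one must confirm that properness is transported in both directions under $\langle \cdot \rangle$ and $(\cdot)_e$, which is immediate from the identities $\langle J \rangle_e = J$ and $\langle P_e \rangle = P$. Everything else is a direct citation.

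\begin{proof}
By Proposition~\ref{prop:leftright}, the map $\langle \cdot \rangle$ is an inclusion-preserving bijection from the set of $G$-invariant ideals of $S_e$ to the set of $G$-graded ideals of $S$, with inclusion-preserving inverse $(\cdot)_e$. It therefore suffices to show that this bijection restricts to the indicated subsets.

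Let $J$ be a $G$-prime ideal of $S_e$. By Proposition~\ref{prop:leftright}, $\langle J \rangle$ is a $G$-graded ideal of $S$ with $\langle J \rangle_e = J$. Since $J$ is a proper ideal of $S_e$, the ideal $\langle J \rangle$ is proper. As $\langle J \rangle_e = J$ is $G$-prime, Proposition~\ref{prop:primeA} shows that $\langle J \rangle$ is graded prime.

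Conversely, let $P$ be a $G$-graded prime ideal of $S$. By Proposition~\ref{prop:ideallygraded}, $P = \langle P_e \rangle$, and since $P$ is proper, so is $P_e$. Proposition~\ref{prop:primeB} then shows that $P_e$ is $G$-prime.

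Hence $\langle \cdot \rangle$ restricts to an inclusion-preserving bijection from the set of $G$-prime ideals of $S_e$ to the set of $G$-graded prime ideals of $S$, with inverse $(\cdot)_e$.
\end{proof}
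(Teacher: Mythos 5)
Your proof is correct and follows exactly the paper's approach: the paper's own proof is simply the one-line citation of Propositions~\ref{prop:leftright}, \ref{prop:primeA} and \ref{prop:primeB}, and your write-up just spells out the restriction argument and the (easy) transport of properness that the paper leaves implicit.
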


\begin{proof}
This follows from Propositions \ref{prop:leftright},
\ref{prop:primeA} and \ref{prop:primeB}.
\end{proof}

\begin{thm}\label{thm:GorderedGprime}
Let $G$ be an ordered group.
Suppose that $S$ is  
ideally symmetrically $G$-graded.
Then $S$ is prime
if and only if $S_e$ is $G$-prime.
\end{thm}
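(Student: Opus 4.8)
The plan is to chain together the two main structural results already established. The key observation is that $S$ being prime is the same as the zero ideal $\{0\}$ being prime, and $\{0\}$ is visibly a graded ideal of $S$; similarly $S_e$ being $G$-prime is the same as $\{0\}$ being a $G$-prime ideal of $S_e$. So I would first apply Theorem~\ref{thm:Gorderedprimegradedprime}, which holds for the ordered group $G$ and the $G$-graded ring $S$ with no hypothesis beyond the grading: it tells us that $S$ is prime if and only if $S$ is graded prime, i.e.\ if and only if $\{0\}$ is a graded prime ideal of $S$.

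Next I would invoke the bijection from Proposition~\ref{prop:leftrightprime}. Since $S$ is ideally symmetrically $G$-graded, that proposition gives an inclusion-preserving bijection between $G$-prime ideals of $S_e$ and $G$-graded prime ideals of $S$, with the two maps being $\langle\cdot\rangle$ and $(\cdot)_e$. I need to check that under this correspondence the zero ideal of $S$ matches the zero ideal of $S_e$. One direction is immediate: $\langle 0\rangle = \{0\}$. For the correspondence to identify $\{0\}\subseteq S$ with $\{0\}\subseteq S_e$, I would note that $(\{0\})_e = \{0\}$, and that $\{0\}$ is indeed a graded ideal of $S$ (trivially), so it lies in the domain of $(\cdot)_e$ as described in Proposition~\ref{prop:leftright}/\ref{prop:leftrightprime}. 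Hence $\{0\}\subseteq S$ is graded prime if and only if its image $(\{0\})_e = \{0\}\subseteq S_e$ is $G$-prime.

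Putting these together: $S$ is prime $\iff$ $\{0\}\subseteq S$ is graded prime (by Theorem~\ref{thm:Gorderedprimegradedprime}) $\iff$ $\{0\}\subseteq S_e$ is $G$-prime (by Proposition~\ref{prop:leftrightprime} applied to the zero ideal) $\iff$ $S_e$ is $G$-prime. This completes the proof.

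I do not anticipate a genuine obstacle here, since all the substantive work has been done in the earlier results; the only thing requiring a moment of care is the bookkeeping that the bijection of Proposition~\ref{prop:leftrightprime} really does send $\{0\}$ to $\{0\}$ in both directions — which is clear because $\langle \{0\}\rangle = \{0\}$ and $(\{0\})_e = \{0\}$ — and checking that ``$P$ is a proper (graded) ideal'' in the hypotheses of the cited results is satisfied, i.e.\ that $\{0\}\ne S$, which holds whenever $S$ is nonzero (and the statement ``$S$ is prime'' already presupposes this, as the zero ideal must be proper). Thus the proof is essentially a one-line deduction from Theorem~\ref{thm:Gorderedprimegradedprime} and Proposition~\ref{prop:leftrightprime}.
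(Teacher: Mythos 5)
Your proof is correct and follows exactly the paper's own argument: the paper deduces the theorem from Theorem~\ref{thm:Gorderedprimegradedprime} together with Proposition~\ref{prop:leftrightprime}, and your bookkeeping that the bijection sends $\{0\}$ to $\{0\}$ is the (implicit) content of that deduction.
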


\begin{proof}
This follows from Theorem \ref{thm:Gorderedprimegradedprime}
and Proposition \ref{prop:leftrightprime}.
\end{proof}

\section{Application: prime Leavitt 
path rings}\label{sec:LPA}

In \cite{lannstrom2025} a characterization of prime Leavitt path rings was obtained as an application of some rather involved methods. Here, we give a shorter proof of the same result (see Theorem~\ref{thm:shortLPA})
by applying Theorem \ref{thm:GorderedGprime}.

Let $E = (E^0, E^1, s, r)$ be a \emph{directed graph},
where $E^0$ is the set of vertices,
$E^1$ is the set of edges, and 
$s,r \colon E^1 \to E^0$ are the source and range maps, respectively. 
For any $v \in E^0$, define 
$s^{-1}(v):=\{ e \in E^1 \mid s(e)=v \}$, 
the set of edges emitted from $v$.  
A \emph{path} in $E$ is a sequence 
$\alpha := f_1 f_2 \dots f_n$ where $f_i \in E^1$ and $r(f_{i}) = s(f_{i+1})$, for all $i$, and such a path is said to have {\it length} $n$. 
Vertices are considered to be  paths of length zero. 
The set of all paths is denoted $E^*$. 
Define a preorder on $E^0$ by writing
$u \geq v$ if there is a path from $u$ to $v$. 
The graph $E$ is said to satisfy 
\emph{condition~(MT-3)} if for all
$u, v \in E^0$, there is 
$w \in E^0$ with $u \geq w$ and $v \geq w$.

For the rest of this section, $R$ denotes an associative unital ring. 
Recall that an \emph{$R$-ring $S$} is a ring that is also an $R$-bimodule such that the following three conditions are satisfied for all $r \in R$ and $s, s' \in S$:
\begin{itemize}
\item $(ss')r = s(s'r)$;
\item $(rs)s' = r(ss')$;
\item $(sr)s' = s(rs')$.
\end{itemize}

The \emph{Leavitt path ring of $E$ over $R$}, denoted by $L_R(E)$, is the associative $R$-ring generated by $\{ v \mid v \in E^0 \} \cup \{ f \mid  f \in E^1 \} \cup \{ f^* \mid f \in E^1 \}$ subject to the following six families of relations:
\begin{enumerate}[{\rm (i)}]

\item $v w = \delta_{v,w} v$ for all $v, w \in E^0$; 

\item $s(f) f = f r(f)=f$  for every $f \in E^1$;

\item $r(f)f^* = f^*s(f)=f^*$  for every $f \in E^1$;

\item $f^* f' = \delta_{f, f'} r(f)$ for all $f, f' \in E^1$;

\item $\sum_{f \in E^1, s(f)=v}f f^* =v $ for every $v \in E^0$ with $s^{-1}(v)$ nonempty and finite; 

\item $rv=vr$, $rf=fr$ and $rf^*=f^*r$, for all $r\in R$ and $f\in E^1$.
\end{enumerate}

\begin{rem}\label{rem:nonzero}
If $\alpha \in E^*$, then $\alpha$ and $\alpha^*$ are referred to as a \emph{real path} and a \emph{ghost path}, respectively, in $L_R(E)$.
By \cite[Lemma 3.5]{lundstrom2023} 
every element in  $E^0 \cup E^1 \cup \{ f^* \mid f \in E^1\}$ 
is nonzero in $L_R(E)$, and the set of real
(respectively ghost) paths is linearly 
independent in the left $R$-module 
$L_R(E)$ and in the right $R$-module 
$L_R(E)$. From this it follows that
all elements of the form $r \alpha \beta^*$
for nonzero $r \in R$ and 
$\alpha,\beta \in E^*$ with 
$r(\alpha)=r(\beta)$, are nonzero.
The element $\alpha\beta^*$ is referred to as a \emph{monomial} in $L_R(E)$.
\end{rem}

\begin{rem}\label{rem:canonicalgrading}
The ring $L_R(E)$ carries a canonical $\Z$-grading defined by ${\rm deg}(f):=1$ and ${\rm deg}(f^*):=-1$ for $f \in E^1$, and ${\rm deg}(v):=0$, for $v \in E^0$ (see \cite[Cor.~2.1.5]{abrams2017}).
This grading is 
nearly epsilon-strong (see \cite{nystedtoinert2019}),
and thus, by 
Proposition~\ref{prop:leftideallysymm}(b),
$L_R(E)$ is 
ideally symmetrically $\Z$-graded.
\end{rem}

\begin{lem}\label{lem:Znonzero}
Suppose that $E$ is a directed
graph and $R$ is a unital ring.
Let $I$ be a nonzero $\mathbb{Z}$-invariant 
ideal of $L_R(E)_0$. Then there is
$v \in E^0$ and a nonzero $r \in R$
such that $rv \in I$.
\end{lem}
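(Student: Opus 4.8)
The plan is to analyze the structure of the degree-zero component $L_R(E)_0$ and exploit the fact that $I$ is nonzero, so it contains some nonzero element, together with the fact that $I$ is $\Z$-invariant, so it is closed under the relevant conjugations. First I would recall the explicit description of $L_R(E)_0$: it is spanned over $R$ by monomials $\alpha\beta^*$ with $\alpha,\beta \in E^*$, $r(\alpha) = r(\beta)$, and $\mathrm{length}(\alpha) = \mathrm{length}(\beta)$. Pick a nonzero element $y \in I$ and write it as a finite $R$-linear combination of such monomials. The goal is to manipulate $y$ down to an element of the form $rv$ for some vertex $v$ and some nonzero $r \in R$.

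The key maneuver is multiplication by ghost and real paths together with the orthogonality relations (iv) $f^*f' = \delta_{f,f'}r(f)$ and the relation $\alpha^*\alpha = r(\alpha)$ for a path $\alpha$. Concretely, choose a monomial $\alpha\beta^*$ appearing in $y$ with a nonzero coefficient and with $\alpha$ (equivalently $\beta$) of maximal length among the monomials appearing; then consider $\beta^* y \alpha$. Using the path multiplication relations, the ``diagonal'' term $\beta^*(\alpha\beta^*)\alpha$ collapses to $r(\alpha) = r(\beta) =: v$ times the coefficient, while every other monomial $\alpha'\beta'^*$ in $y$ either gets annihilated or contributes a term of strictly smaller length (or a term $\gamma\delta^*$ with $\mathrm{length}(\gamma) = \mathrm{length}(\delta)$ but now sitting in $vL_R(E)_0v$); the maximality of $\alpha$ controls these cross terms. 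One then argues that after finitely many such reductions (an induction on the maximal length of monomials appearing, or on the number of monomials) one arrives at an element of $I$ of the form $rv$ with $r \neq 0$ in $R$; nonvanishing of $rv$ in $L_R(E)$ is guaranteed by Remark~\ref{rem:nonzero}. Note that at no point is $\Z$-invariance strictly needed for this particular reduction, since $\beta^* y \alpha$ already lies in $I$ because $I$ is an ideal of $L_R(E)_0$ and $\beta^*(\,\cdot\,)\alpha$ maps $L_R(E)_0$ into itself; $\Z$-invariance of $I$ is the hypothesis that makes $I$ a candidate for correspondence with graded ideals, but here we only use that $I$ is an ideal of $L_R(E)_0$.

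The main obstacle is bookkeeping the cross terms: when we form $\beta^* y \alpha$, a monomial $\alpha'\beta'^*$ in $y$ produces $\beta^*\alpha'\beta'^*\alpha$, and simplifying this requires a careful case analysis according to whether $\beta$ is an initial subpath of $\alpha'$ or vice versa (and similarly for $\beta'$ and $\alpha$). The relation (iv) forces $\beta^*\alpha' = 0$ unless one of $\beta,\alpha'$ is an initial segment of the other, in which case $\beta^*\alpha'$ is either a real path tail or a ghost path tail (or the vertex $r(\beta)$ when $\alpha' = \beta$). Handling all these cases and verifying that the maximal-length term survives with its original nonzero coefficient while strictly-smaller-length garbage accumulates (and can be cleared by a subsequent induction step) is the delicate part. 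A clean way to organize this is: first reduce to the case where $y$ is ``concentrated at a single vertex'', i.e. $y = vyv$ for some $v \in E^0$, by multiplying $y$ on both sides by a suitable $r(\alpha)$; then induct on the maximal path-length occurring in $y$, using the squeeze $\beta^* (\,\cdot\,) \alpha$ to strip off one layer at a time while keeping a nonzero $R$-coefficient.
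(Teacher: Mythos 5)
Your overall strategy (squeeze a nonzero $a\in I$ between a ghost path and a real path to collapse it to $rv$) is the same as the paper's, which simply cites \cite[Prop.~14.11]{lannstrom2025} for the existence of $\alpha,\beta\in E^*$, $v\in E^0$ and $r\in R\setminus\{0\}$ with $\alpha^*a\beta=rv$, and then finishes in one line. However, your parenthetical claim that ``at no point is $\Z$-invariance strictly needed \ldots{} since $\beta^*y\alpha$ already lies in $I$ because $I$ is an ideal of $L_R(E)_0$'' is wrong, and it concerns precisely the step for which the hypothesis exists. If $\alpha$ and $\beta$ have length $n>0$, then $\beta^*\in L_R(E)_{-n}$ and $\alpha\in L_R(E)_{n}$ are \emph{not} elements of $L_R(E)_0$; the fact that $I$ is an ideal of $L_R(E)_0$ gives $L_R(E)_0\,I\,L_R(E)_0\subseteq I$ but says nothing about $L_R(E)_{-n}\,I\,L_R(E)_{n}$. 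That the map $\beta^*(\,\cdot\,)\alpha$ sends $L_R(E)_0$ into $L_R(E)_0$ is irrelevant: an ideal of the degree-zero component need not absorb conjugation by homogeneous elements of nonzero degree. The inclusion $S_{-n}IS_{n}\subseteq I$ is exactly the definition of $\Z$-invariance, and it is the only place in the paper's proof where that hypothesis enters.

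A second, independent gap is in the reduction itself. The monomials $\alpha\beta^*$ spanning $L_R(E)_0$ are \emph{not} linearly independent over $R$ (relation (v) identifies $v$ with $\sum_{s(f)=v}ff^*$), so ``the monomials appearing in $y$ with nonzero coefficients'' is not well defined; moreover, when you form $\beta^*y\alpha$ with $\alpha\beta^*$ of maximal length, the surviving cross terms $\beta^*\alpha'\beta'^*\alpha$ also collapse to multiples of the same vertex $r(\alpha)$ and may cancel the diagonal contribution. Controlling this cancellation is the actual content of \cite[Prop.~14.11]{lannstrom2025}; labelling it ``the delicate part'' without resolving it leaves the main step unproved. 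If you restore the use of $\Z$-invariance at the conjugation step and either cite that proposition or carry out the reduction in full, your argument becomes essentially the paper's.
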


\begin{proof}
Put $S := L_R(E)$.
Take a nonzero $a \in I$.
By \cite[Prop. 14.11]{lannstrom2025} there exist 
$\alpha,\beta \in E^*$, $v \in E^0$ and $r \in R \setminus \{ 0 \}$ with
$\alpha^* a \beta = rv$. 
Since $rv \in S_0$ it follows from 
the $\mathbb{Z}$-grading of $S$ 
that $\deg(\alpha)=\deg(\beta) =: n$.
Since $I$ is $\mathbb{Z}$-invariant, 
$rv = \alpha^* a \beta \in S_{-n} I S_n
\subseteq I$.
\end{proof}

\begin{thm}\label{thm:shortLPA}
Suppose that $E$ is a directed graph and 
$R$ is a unital ring.
Then the Leavitt path ring $L_R(E)$ is
prime if and only if $R$ is prime and 
$E$ satisfies condition (MT-3).
\end{thm}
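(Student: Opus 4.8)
The plan is to reduce the primeness of $L_R(E)$ to a $\Z$-primeness statement about the degree-zero subring and then check that statement by direct computation. Since the canonical $\Z$-grading on $S := L_R(E)$ is ideally symmetrically $\Z$-graded (Remark~\ref{rem:canonicalgrading}) and $\Z$ is an ordered group, Theorem~\ref{thm:GorderedGprime} tells us that $S$ is prime if and only if $S_0$ is $\Z$-prime. So the whole theorem becomes: \emph{$S_0$ is $\Z$-prime if and only if $R$ is prime and $E$ satisfies condition (MT-3).}

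For the ``if'' direction I would argue contrapositively: suppose $S_0$ is not $\Z$-prime, so there are nonzero $\Z$-invariant ideals $J, K$ of $S_0$ with $JK = \{0\}$. By Lemma~\ref{lem:Znonzero} there are vertices $u, v \in E^0$ and nonzero $r, r' \in R$ with $ru \in J$ and $r'v \in K$. If $E$ satisfies (MT-3), pick $w \in E^0$ with $u \ge w$ and $v \ge w$, witnessed by paths $\alpha$ (from $u$ to $w$) and $\beta$ (from $v$ to $w$). Then $\alpha^*(ru)\alpha \in S_0$ and, using $\Z$-invariance of $J$, lies in $J$; a short computation with relations (i)--(iv) shows $\alpha^*(ru)\alpha = rw$ (up to the obvious normalisation), and similarly $\beta^*(r'v)\beta = r'w \in K$. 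Hence $rw, r'w \in S_0$ with $rw \in J$, $r'w \in K$, so $rw \cdot r' w = rr' w \in JK = \{0\}$; since $w \ne 0$ in $L_R(E)$ (Remark~\ref{rem:nonzero}) this forces $rr' = 0$ in $R$. Because $RrR$ and $Rr'R$ are nonzero ideals of $R$ and $(RrR)(Rr'R) \subseteq$ the ideal generated by $rr' = 0$ (here one uses commutativity of $R$ with the vertex idempotents from relation (vi) to move scalars past $w$), we conclude $R$ is not prime. If instead (MT-3) fails, there are $u, v \in E^0$ with no common descendant; I would take $J$ and $K$ to be the $\Z$-invariant ideals of $S_0$ generated (inside $S_0$) by $u$ and $v$ respectively, spanned by monomials $r\alpha\beta^*$ with $\alpha, \beta$ terminating at a common vertex reachable from $u$ (resp.\ from $v$), and check $JK = \{0\}$ because a nonzero product would produce a path configuration forcing a common descendant of $u$ and $v$.

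For the ``only if'' direction, assume $R$ is prime and (MT-3) holds, and show $S_0$ is $\Z$-prime. Let $J, K$ be nonzero $\Z$-invariant ideals of $S_0$; by Lemma~\ref{lem:Znonzero} get $u, v \in E^0$ and nonzero $r, r' \in R$ with $ru \in J$, $r'v \in K$. Using (MT-3) choose $w$ with paths $\alpha: u \to w$, $\beta: v \to w$; as above $rw \in J$ and $r'w \in K$. Then $rw \cdot r' w = rr'w$, and since $R$ is prime $RrR \cdot Rr'R \ne \{0\}$, so we can find $a, b \in R$ with $ar r' b \ne 0$ (reducing the two-sided statement to a one-sided one via primeness), giving a nonzero element $arr'bw \in JK$ (again using (vi) to commute scalars). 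Hence $JK \ne \{0\}$, so $S_0$ is $\Z$-prime.

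The main obstacle I anticipate is the (MT-3)-fails case in the ``if'' direction: one must define the two $\Z$-invariant ideals of $S_0$ explicitly in terms of admissible monomials $r\alpha\beta^*$ of degree zero, verify they are genuinely $\Z$-invariant ideals of $S_0$ (not of $S$), confirm they are nonzero (via Remark~\ref{rem:nonzero}), and then prove their product vanishes by a careful path-combinatorial argument — a nonzero monomial in the product would, after cancelling ghost/real segments using relations (ii)--(iv), exhibit a single vertex lying below both $u$ and $v$, contradicting the failure of (MT-3). Keeping track of the normalising idempotents and the fact that $L_R(E)_0$ need not be unital is the delicate bookkeeping here; everything else is a routine application of Theorem~\ref{thm:GorderedGprime}, Lemma~\ref{lem:Znonzero}, and the Leavitt relations.
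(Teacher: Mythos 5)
Your reduction via Theorem~\ref{thm:GorderedGprime} and the path-transport computation $\alpha^*(ru)\alpha = rw$ are exactly the paper's strategy for the ``if'' direction, but the proposal has a structural gap: both of your labelled directions end up proving the \emph{same} implication, namely ``$R$ prime and (MT-3) $\Rightarrow S_0$ is $\Z$-prime'' (your ``if'' paragraph is its contrapositive, your ``only if'' paragraph is the direct version). The converse still requires showing that if $R$ is \emph{not} prime then $L_R(E)$ is not prime, and this case appears nowhere in your proposal. (The paper handles it directly in $S$: if $I,I'$ are nonzero ideals of $R$ with $II'=\{0\}$, the ideals of $S$ spanned by monomials with coefficients in $I$, resp.\ $I'$, are nonzero by Remark~\ref{rem:nonzero} and have zero product.) The (MT-3)-fails construction that you flag as the main obstacle is also easier at the level of $S$ than of $S_0$: take $C:=SvS$ and $D:=SwS$; a nonzero monomial in $vSw$ immediately yields a common descendant of $v$ and $w$, so none of the $\Z$-invariance bookkeeping you anticipate is needed.

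There is also a genuine error in how you use primeness of $R$, and it occurs in both of your paragraphs. In the contrapositive version you deduce $rr'=0$ and then claim $(RrR)(Rr'R)$ lies in the ideal generated by $rr'$; this is false in a noncommutative ring (take $R=M_2(k)$, $r=E_{11}$, $r'=E_{22}$: then $rr'=0$ but $RrR=Rr'R=R$), so $rr'=0$ does not contradict primeness of $R$. Symmetrically, in the direct version, $(RrR)(Rr'R)\neq\{0\}$ does not produce $a,b$ with $arr'b\neq 0$, so the element $arr'bw$ you exhibit may be zero. The correct move, which is what the paper does, is to transport the entire ideals rather than single elements: since $J$ is an ideal of $S_0$ and $(aw)(rw)(bw)=(arb)w$ by relations (i) and (vi), one gets $(RrR)w\subseteq J$ and $(Rr'R)w\subseteq K$, hence $\bigl((RrR)(Rr'R)\bigr)w\subseteq JK$, and the linear independence of monomials over $R$ from Remark~\ref{rem:nonzero} converts $(RrR)(Rr'R)\neq\{0\}$ into $JK\neq\{0\}$ (equivalently, find $c$ with $rcr'\neq 0$ and use the nonzero element $(rcr')w$). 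With these two repairs your argument coincides with the paper's proof.
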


\begin{proof}
Put $S := L_R(E)$. 
First we show the
``only if'' statement.
Suppose that $R$ is not prime.
Take nonzero ideals $I,J$ of $R$
with $IJ = \{ 0 \}$. 
Let $A$ and $B$ be the ideals of $S$ 
consisting of sums of monomials
with coefficients in $I$ and $J$,
respectively. By Remark \ref{rem:nonzero},
$A$ and $B$ are nonzero.
Clearly, $AB = \{ 0 \}$, and hence 
$S$ is not prime.
Suppose now that $E^0$ does not 
satisfy condition (MT-3).
Then there exist $v, w \in E^0$ such
that for every $y \in E^0$,
$v \ngeq  y$ or $w \ngeq y$.
Put $C := SvS$ and
$D := SwS$ of $S$. 
By Remark \ref{rem:nonzero},
$C$ and $D$ are
nonzero ideals of $S$. 
Seeking a contradiction,
suppose that $CD \neq \{ 0 \}$. Then there
exist $\alpha,\beta \in E^*$ with 
$v \alpha \beta^* w \neq 0$ which gives
the contradiction $v \geq u$ and
$w \geq u$ with $u := r(\alpha)=r(\beta)$.
Therefore $CD = \{ 0 \}$. Hence $S$ is
not prime.

Now we show the ``if'' statement.
Suppose that $R$ is prime and 
$E$ satisfies (MT-3).
Seeking a contradiction,
suppose that $S$ is not prime.
By Theorem \ref{thm:GorderedGprime} and 
Remark \ref{rem:canonicalgrading},
there exist nonzero $\mathbb{Z}$-invariant
ideals $A$ and $B$ of $S_0$ with
$AB = \{ 0 \}$. By Lemma \ref{lem:Znonzero},
there are $r,s \in R \setminus \{ 0 \}$ and 
$v,w \in E^0$ with $rv \in A$ and 
$sw \in B$. From (MT-3) there are 
$\alpha,\beta \in E^*$ and $u \in E^0$ with 
$s(\alpha)=v$, $r(\alpha) = r(\beta)= u$
and $s(\beta)=w$. 
Consider the nonzero ideals $I := RrR$ and 
$J := RsR$ of $R$. By primeness of $R$ and
Remark \ref{rem:nonzero} it follows that 
$(IJ)u \neq \{ 0 \}$. Since 
$\alpha^* \alpha = u = \beta^* \beta$,
$\mathbb{Z}$-invariance of $A$ and $B$
imply that $Iu = \alpha^*(I v)\alpha
\subseteq \alpha^* A \alpha
\subseteq A$ and $Ju = \beta^*(Jw) \beta
\subseteq \beta^* B \beta
\subseteq B$. This gives the sought 
contradiction:
$\{ 0 \} = AB \supseteq (Iu)(Ju) = 
(IJ)u \neq \{ 0 \}$.
\end{proof}

\section{Application: prime graded subrings of group rings}\label{sec:primegradedsubringsofgrouprings}

The purpose of this section is twofold.
Given any nontrivial group $G$, we will show that
\begin{itemize}

\item the class of ideally symmetrically $G$-graded graded 
rings strictly contains the class of epsilon-strongly
$G$-graded rings (see Proposition~\ref{prop:RL}), and

\item the class of symmetrically $G$-graded rings
strictly contains the class of  
ideally symmetrically $G$-graded graded 
rings (see Proposition~\ref{prop:example}).

\end{itemize}
To establish this, we consider certain
subrings of group rings defined by ideal filters.
Namely, throughout this section, 
we make the following assumptions:
\begin{itemize}
\item  $R$ is an associative 
but not necessarily unital ring, and $G$ is a group;
\item 
for each $x \in G$,
we are given an additive subgroup 
$I_x$ of $R$ with $I_e = R$;

\item we consider the additive subgroup 
$S := \oplus_{x \in G} I_x x$
of the group ring $R[G]$ of $G$ over $R$;

\item for each $x \in G$, put 
$S_x := I_x x$, so that the additive 
group $S$ is $G$-graded
by $(S_x)_{x \in G}$.

\end{itemize}
We say that $(I_x)_{x \in G}$ is a 
\emph{$G$-filter} if for all $x,y \in G$,
the inclusion 
$I_x I_y \subseteq I_{xy}$ holds.

\begin{prop}\label{prop:gradedsubring}
The additive $G$-graded group $S$ is a 
$G$-graded subring of $R[G]$ 
if and only if $(I_x)_{x \in G}$ is a 
$G$-filter of ideals of $R$.
\end{prop}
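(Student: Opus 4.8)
The plan is to prove the equivalence in Proposition~\ref{prop:gradedsubring} by unwinding the definition of the multiplication in $R[G]$ and checking closure degree-by-degree. Recall that $S = \oplus_{x \in G} I_x x$ is, by construction, already an additive subgroup of $R[G]$, and the homogeneous decomposition $S_x := I_x x$ satisfies the inclusion $S_x S_y \subseteq S_{xy}$ automatically once we know $S$ is closed under the ring multiplication; so the content of the statement is precisely the equivalence \textquotedblleft $S$ is closed under multiplication in $R[G]$\textquotedblright{} $\iff$ \textquotedblleft $(I_x)_{x\in G}$ is a $G$-filter\textquotedblright.

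First I would prove the \textquotedblleft if\textquotedblright{} direction. Assume $(I_x)_{x\in G}$ is a $G$-filter, i.e. $I_x I_y \subseteq I_{xy}$ for all $x,y\in G$. Take arbitrary homogeneous elements $a x \in S_x$ and $b y \in S_y$ with $a \in I_x$, $b \in I_y$. In $R[G]$ we have $(ax)(by) = (ab)(xy)$, and since $ab \in I_x I_y \subseteq I_{xy}$, this lies in $I_{xy}(xy) = S_{xy} \subseteq S$. A general element of $S$ is a finite sum of such homogeneous elements, so by distributivity the product of any two elements of $S$ lies in $S$; hence $S$ is a subring of $R[G]$, and the inclusion $S_x S_y \subseteq S_{xy}$ we just verified shows the grading $(S_x)_{x\in G}$ is a ring grading on $S$.

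For the \textquotedblleft only if\textquotedblright{} direction, assume $S$ is a $G$-graded subring of $R[G]$. Fix $x,y \in G$ and take any $a \in I_x$, $b \in I_y$. Then $ax \in S_x \subseteq S$ and $by \in S_y \subseteq S$, so by closure $ (ax)(by) = (ab)(xy) \in S$. Since the only homogeneous component of $(ab)(xy)$ in $R[G]$ lies in degree $xy$, and $S \cap R[G]_{xy} = I_{xy}(xy)$, we conclude $ab \in I_{xy}$. As $a,b$ were arbitrary, $I_x I_y \subseteq I_{xy}$, so $(I_x)_{x\in G}$ is a $G$-filter. (If one prefers, invoke the given $S_x S_y \subseteq S_{xy}$ directly: $ (ax)(by) \in S_{xy} = I_{xy}(xy)$ gives the same conclusion.)

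I do not expect a genuine obstacle here: the proof is essentially a bookkeeping exercise with the group-ring multiplication formula $(ax)(by) = (ab)(xy)$ and the observation that the $R[G]$-grading by $G$ refines the decomposition of $S$. The only point requiring a little care is making sure \textquotedblleft$G$-filter\textquotedblright{} as defined (a family of additive subgroups with $I_x I_y \subseteq I_{xy}$, together with the standing assumption $I_e = R$) is exactly what the subring closure condition amounts to; in particular one should note that each $I_x$ is automatically a two-sided ideal of $R = I_e$ because $I_e I_x \subseteq I_x$ and $I_x I_e \subseteq I_x$, which is why the proposition speaks of \emph{ideals} of $R$ in its conclusion.
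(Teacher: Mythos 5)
Your proof is correct and follows essentially the same route as the paper: translate $S_xS_y\subseteq S_{xy}$ into $I_xI_y\subseteq I_{xy}$ via $(ax)(by)=(ab)(xy)$, and use $I_e=R$ together with $S_eS_x\subseteq S_x$ and $S_xS_e\subseteq S_x$ to see that each $I_x$ is a two-sided ideal. Your extra observation that the grading inclusions come for free from mere multiplicative closure (since $S$'s decomposition is induced from the $G$-grading of $R[G]$) is a harmless refinement of what the paper states directly.
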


\begin{proof}
Suppose that $S$ is a $G$-graded subring of $R[G]$.
Take $x,y \in G$.
Since $I_x$ is an additive subgroup of 
$S$ and the 
inclusions $S_e S_x \subseteq S_x$ and 
$S_x S_e \subseteq S_x$ hold, 
it follows that
$I_x$ is an ideal of $R$. The inclusion
$S_x S_y \subseteq S_{xy}$ implies that 
$I_x I_y \subseteq I_{xy}$. 
This establishes the
``only if'' part of the statement. 
The ``if''
part of the statement follows analogously.
\end{proof}

\begin{prop}\label{prop:symmetricidempotent}
Suppose that $S$ is a $G$-graded subring of $R[G]$.
Then $S$ is symmetrically $G$-graded if and only if 
$R$ is idempotent and for each $x \in G$, the equality 
$I_x I_{x^{-1}} I_x = I_x$ holds.
\end{prop}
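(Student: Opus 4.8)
The plan is to unravel the definition of symmetrically $G$-graded for the specific ring $S = \oplus_{x \in G} I_x x$, using that multiplication in $R[G]$ multiplies coefficients and group elements separately. For $x \in G$ we compute $S_x S_{x^{-1}} S_x = (I_x x)(I_{x^{-1}} x^{-1})(I_x x) = (I_x I_{x^{-1}} I_x)\, x$, so the condition $S_x S_{x^{-1}} S_x = S_x$ is literally equivalent to $I_x I_{x^{-1}} I_x = I_x$. Hence the equality $I_x I_{x^{-1}} I_x = I_x$ for all $x$ is immediately seen to be \emph{necessary} for symmetry, and together with the analogous observation for $x = e$ it is also clearly sufficient --- the only gap is the interplay with idempotency of $R$, which enters precisely through the component $x = e$, where $S_e = I_e e = R e$ and $I_e I_{e^{-1}} I_e = R\,R\,R = R^3$.

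First I would record the elementary identity $S_x S_{x^{-1}} S_x = (I_x I_{x^{-1}} I_x)\,x$ for every $x \in G$ (here one uses $I_e = R$ so that there are no issues with the coefficient subgroups being ideals, by Proposition~\ref{prop:gradedsubring}). For the ``only if'' direction, assume $S$ is symmetrically $G$-graded. For arbitrary $x \in G$ the identity above gives $I_x I_{x^{-1}} I_x = I_x$ directly. Specializing to $x = e$ gives $R^3 = R$; since $R$ is a ring, $R^2 \supseteq R^3 = R \supseteq R^2$, so $R^2 = R$, i.e.\ $R$ is idempotent.

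For the ``if'' direction, assume $R$ is idempotent and $I_x I_{x^{-1}} I_x = I_x$ for all $x \in G$. Then for every $x \in G$ the identity gives $S_x S_{x^{-1}} S_x = (I_x I_{x^{-1}} I_x)\,x = I_x x = S_x$, which is exactly the definition of symmetrically $G$-graded. (The hypothesis ``$R$ idempotent'' is strictly speaking not needed for this direction once one knows $I_x I_{x^{-1}} I_x = I_x$ for \emph{all} $x$, since $x = e$ already yields $R^3 = R$ and hence $R^2 = R$; but stating it as a hypothesis keeps the two sides of the equivalence symmetric, and one should remark on this so the reader is not confused.)

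I do not expect a genuine obstacle here: the whole argument is a direct computation in the group ring plus the trivial observation that $R^3 = R \Rightarrow R^2 = R$. The only point requiring a moment's care is making sure that the coefficient subgroups $I_x$ are genuinely ideals of $R$ (needed so that products like $I_x I_{x^{-1}}$ behave), which is guaranteed by the standing hypothesis that $S$ is a $G$-graded subring of $R[G]$ together with Proposition~\ref{prop:gradedsubring}; and noting that the case $x = e$ is where idempotency of $R$ is forced, so it cannot be omitted from the statement.
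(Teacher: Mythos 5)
Your proposal is correct and follows essentially the same route as the paper: both reduce $S_x S_{x^{-1}} S_x = S_x$ to $I_x I_{x^{-1}} I_x = I_x$ via the coefficient identity in $R[G]$, and both extract idempotency of $R$ from the case $x=e$ using $R = RRR \subseteq RR \subseteq R$. Your added remark that idempotency of $R$ is redundant in the ``if'' direction is accurate but not needed for the equivalence.
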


\begin{proof}
The ring $S$ is symmetrically $G$-graded if and only 
if the equality $S_x S_{x^{-1}} S_x = S_x$
holds for every $x \in G$. This is, in turn, equivalent to 
$I_x I_{x^{-1}} I_x = I_x$ for every $x \in G$.
Putting $x = e$ in this equality yields
$R R R = R$ so that $R = R R R \subseteq R R \subseteq R$. Hence, $R = RR$.
\end{proof}

Recall that a ring $R$ is called \emph{fully idempotent}
if every ideal of $R$ is idempotent. In that case,
in particular, $R$ itself has to be idempotent.
From \cite[Thm. 1.2]{courter1969} we recall the
following:

\begin{prop}\label{prop:Rfullyidempotent}
A ring is fully idempotent
if and only if for all ideals $I$ and $J$ 
of the ring the
equality $IJ = I \cap J$ holds.
\end{prop}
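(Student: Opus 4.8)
The statement to prove is Proposition~\ref{prop:Rfullyidempotent}, which characterizes fully idempotent rings as those in which $IJ = I \cap J$ for every pair of ideals $I, J$. The plan is to prove both implications directly from the definition, with the nontrivial direction being the ``only if'' part. Throughout, one uses the elementary fact that for any ideals $I, J$ of a ring, $IJ \subseteq I \cap J$ always holds, since $I$ absorbs multiplication on the right and $J$ absorbs it on the left; thus in each direction only a reverse inclusion needs to be established.

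For the ``if'' direction, suppose $IJ = I \cap J$ for all ideals $I, J$. Taking $J = I$ immediately yields $I^2 = I \cap I = I$, so every ideal is idempotent and $R$ is fully idempotent.

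For the ``only if'' direction, suppose $R$ is fully idempotent and let $I, J$ be arbitrary ideals. I need $I \cap J \subseteq IJ$. The key trick is to apply idempotency not to $I$ or $J$ individually but to the ideal $K := I \cap J$. Since $R$ is fully idempotent, $K$ is idempotent, so $I \cap J = K = K^2 = (I \cap J)(I \cap J) \subseteq IJ$, because $I \cap J \subseteq I$ on the left factor and $I \cap J \subseteq J$ on the right factor. Combined with the always-valid inclusion $IJ \subseteq I \cap J$, this gives $IJ = I \cap J$, completing the proof.

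I do not anticipate a genuine obstacle here; the only subtlety is recognizing that one should invoke idempotency of the intersection ideal $I \cap J$ rather than of $I$ and $J$ separately. One should also note that the argument does not require $R$ to be unital or commutative — it uses only that ideals are two-sided and that products and finite sums of ideals are again ideals, which is consistent with the nonunital conventions fixed at the start of the paper. Since the cited reference \cite[Thm.~1.2]{courter1969} already contains this equivalence, the proof in the paper may simply be this short self-contained argument, or it may defer entirely to the citation; in either case no lengthy computation is involved.
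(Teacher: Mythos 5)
Your proof is correct: the ``if'' direction via $J=I$ and the ``only if'' direction via idempotency of $K=I\cap J$ (giving $I\cap J=(I\cap J)(I\cap J)\subseteq IJ$, with the reverse inclusion $IJ\subseteq I\cap J$ automatic for two-sided ideals even in the nonunital setting) is the standard argument. The paper itself supplies no proof and simply recalls the statement from Courter's Theorem~1.2, so your self-contained argument is exactly what that citation covers; there is nothing to add.
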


\begin{prop}\label{prop:gginverse}
Let $S$ be a $G$-graded subring of $R[G]$
where $R$ is fully idempotent.
Then $S$ is symmetrically $G$-graded if and only if 
for every $x \in G$ the equality $I_x = I_{x^{-1}}$ holds.
\end{prop}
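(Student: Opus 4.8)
The plan is to combine the previous proposition (Proposition~\ref{prop:symmetricidempotent}) with the characterization of fully idempotent rings (Proposition~\ref{prop:Rfullyidempotent}). Since $R$ is already assumed to be fully idempotent, in particular $R$ is idempotent, so by Proposition~\ref{prop:symmetricidempotent} the ring $S$ is symmetrically $G$-graded if and only if $I_x I_{x^{-1}} I_x = I_x$ holds for every $x \in G$. Thus the whole statement reduces to showing that, under the full idempotence hypothesis on $R$, the family of conditions $I_x I_{x^{-1}} I_x = I_x$ (for all $x$) is equivalent to the family of conditions $I_x = I_{x^{-1}}$ (for all $x$).

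For the ``if'' direction, suppose $I_x = I_{x^{-1}}$ for every $x \in G$. Then $I_x I_{x^{-1}} I_x = I_x I_x I_x$, and since $R$ is fully idempotent the ideal $I_x$ is idempotent, so $I_x I_x = I_x$ and hence $I_x I_x I_x = I_x$, giving the required equalities; Proposition~\ref{prop:symmetricidempotent} then yields that $S$ is symmetrically $G$-graded. For the ``only if'' direction, assume $S$ is symmetrically $G$-graded, so $I_x I_{x^{-1}} I_x = I_x$ for all $x$. Fix $x \in G$. Using Proposition~\ref{prop:Rfullyidempotent} twice, $I_x I_{x^{-1}} = I_x \cap I_{x^{-1}}$, so from $I_x = (I_x \cap I_{x^{-1}}) I_x \subseteq I_{x^{-1}} I_x \subseteq I_{x^{-1}}$ (the last inclusion because $I_{x^{-1}}$ is an ideal and $I_x \subseteq R$) we obtain $I_x \subseteq I_{x^{-1}}$. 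Replacing $x$ by $x^{-1}$ gives $I_{x^{-1}} \subseteq I_x$, hence $I_x = I_{x^{-1}}$.

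I do not anticipate a genuine obstacle here; the statement is essentially a two-line consequence of the earlier results. The only point requiring a little care is the direction extracting $I_x = I_{x^{-1}}$ from symmetry: one must use full idempotence (not just idempotence of $R$) to rewrite the triple product $I_x I_{x^{-1}} I_x$ via $I_x I_{x^{-1}} = I_x \cap I_{x^{-1}}$ and then shrink the resulting expression into $I_{x^{-1}}$. I would also remark explicitly that the hypothesis ``$R$ fully idempotent'' is what makes Proposition~\ref{prop:symmetricidempotent} applicable (idempotence of $R$ is automatic) and simultaneously supplies the tool (Proposition~\ref{prop:Rfullyidempotent}) for the reduction, so no separate verification of idempotence of $R$ is needed.
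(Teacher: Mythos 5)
Your proof is correct and follows essentially the same route as the paper: both reduce via Proposition~\ref{prop:symmetricidempotent} to the condition $I_x I_{x^{-1}} I_x = I_x$ and then use Proposition~\ref{prop:Rfullyidempotent} (the paper collapses the triple product directly to $I_x \cap I_{x^{-1}}$, while you split the two directions, but this is only a cosmetic difference). No gaps.
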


\begin{proof}
By Propositions \ref{prop:symmetricidempotent} and
\ref{prop:Rfullyidempotent}, 
$S$ is symmetrically $G$-graded
$\Leftrightarrow$
$\forall x \in G$: 
$I_x \cap I_{x^{-1}} = I_x$
$\Leftrightarrow$
$\forall x \in G$: 
$I_x \subseteq I_{x^{-1}}$ 
$\Leftrightarrow$ 
$\forall x \in G$: $I_x = I_{x^{-1}}$.
\end{proof}

\begin{prop}\label{prop:SsymmSrightsymm}
Let $S$ be a $G$-graded subring of $R[G]$ where $R$ is fully idempotent.
Then $S$ is symmetrically $G$-graded if and only if
$S$ is ideally symmetrically $G$-graded.
\end{prop}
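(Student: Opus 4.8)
The plan is to prove Proposition~\ref{prop:SsymmSrightsymm} by establishing the nontrivial implication: if $S$ is symmetrically $G$-graded (and $R$ is fully idempotent), then $S$ is ideally symmetrically $G$-graded. The reverse implication is immediate from Proposition~\ref{prop:leftideallysymm}(a). So assume $S$ is symmetrically $G$-graded. By Proposition~\ref{prop:gginverse}, this gives us the key structural fact that $I_x = I_{x^{-1}}$ for every $x \in G$, and by Proposition~\ref{prop:symmetricidempotent} we also know $R$ is idempotent. Now let $I$ be an arbitrary graded ideal of $S$ and fix $x \in G$. Write $I = \oplus_{y \in G} I_y^I$ where each homogeneous component has the form $I_y^I = J_y y$ for some additive subgroup $J_y \subseteq I_y \subseteq R$; since $I$ is an ideal of $S$, absorbing multiplication by $S_e = Rx^0 = R$ on both sides shows each $J_y$ is an ideal of $R$.

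The first step is to unpack what $S_x S_{x^{-1}} I_x = I_x$ (in the ideally-symmetric sense, with $I_x$ meaning the degree-$x$ component $J_x x$ of $I$) amounts to at the level of $R$: since $S_x S_{x^{-1}} = (I_x I_{x^{-1}}) e = (I_x I_x) e$ using $I_x = I_{x^{-1}}$, the left-hand side $S_x S_{x^{-1}} (J_x x)$ equals $(I_x I_x J_x) x$. So the desired equality reduces to proving $I_x I_x J_x = J_x$ as ideals of $R$, and symmetrically $J_x I_x I_x = J_x$. The inclusion $I_x I_x J_x \subseteq J_x$ is clear since $J_x \subseteq I_x = I_{x^{-1}}$ forces $I_x I_x J_x \subseteq I_x I_{x^{-1}} I_x = I_x$ — wait, more carefully, $I_x I_x J_x \subseteq R R J_x \subseteq J_x$ because $J_x$ is an ideal of $R$. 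The real content is the reverse inclusion $J_x \subseteq I_x I_x J_x$.

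For this I would invoke full idempotence via Proposition~\ref{prop:Rfullyidempotent}, which says $AB = A \cap B$ for all ideals $A, B$ of $R$. The needed fact is that $J_x \subseteq I_x$: indeed $J_x x = I_x^I \subseteq S_x = I_x x$. Since $J_x$ is an ideal and $R$ is fully idempotent, $J_x = J_x J_x = J_x \cap J_x$, and more usefully, applying the identity to $I_x$ and $J_x$: $I_x J_x = I_x \cap J_x = J_x$ (using $J_x \subseteq I_x$). Iterating, $I_x I_x J_x = I_x (I_x J_x) = I_x J_x = J_x$. This gives $I_x I_x J_x = J_x$, hence $S_x S_{x^{-1}} I_x = I_x$; the computation for $I_x S_{x^{-1}} S_x = I_x$ is symmetric, using $J_x I_x = J_x \cap I_x = J_x$ and $I_x = I_{x^{-1}}$ again. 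Since $I$ and $x$ were arbitrary, $S$ is ideally symmetrically $G$-graded.

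The main obstacle is mostly bookkeeping: one must be careful to distinguish the component $I_x$ of the filter (an ideal of $R$) from the degree-$x$ component $I \cap S_x$ of the graded ideal $I$ (which corresponds to a possibly smaller ideal $J_x$ of $R$), and to correctly translate the ambient equalities $S_x S_{x^{-1}}(I\cap S_x) = I \cap S_x$ into statements purely about ideals of $R$. Once the translation is set up, the full idempotence hypothesis does all the work through the single clean identity $AB = A \cap B$, and the assumption $I_x = I_{x^{-1}}$ (which is exactly what symmetric grading buys us over fully idempotent $R$, by Proposition~\ref{prop:gginverse}) is what lets $S_x S_{x^{-1}}$ land inside the relevant ideals. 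No finiteness or ordering hypotheses on $G$ are needed here.
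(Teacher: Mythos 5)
Your proposal is correct and follows essentially the same route as the paper: reduce the backward implication to Proposition~\ref{prop:leftideallysymm}(a), write the degree-$x$ component of a graded ideal as $K_x x$ for an ideal $K_x \subseteq I_x$ of $R$, and then use Proposition~\ref{prop:gginverse} together with the identity $AB = A \cap B$ from Proposition~\ref{prop:Rfullyidempotent} to get $I_x I_{x^{-1}} K_x = K_x$. The only cosmetic difference is that you apply the intersection identity iteratively while the paper collapses $I_x I_x K_x$ to $I_x \cap I_x \cap K_x$ in one step.
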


\begin{proof}
The ``if'' statement follows from Proposition~\ref{prop:leftideallysymm}(a).
Now we show the ``only if'' statement.
Suppose that $S$ is symmetrically $G$-graded.
Let $J$ be a graded ideal of $S$. 
Take $x \in G$.
Since $RJ \subseteq J$
and $JR \subseteq J$ it follows that 
there is an ideal $K_x$ of $R$ such that $J_x = K_x x$.
From $J \subseteq S$ we get that $K_x \subseteq I_x$.
By Proposition~\ref{prop:Rfullyidempotent}
and Proposition~\ref{prop:gginverse}, it follows that
$S_x S_{x^{-1}} J_x = 
I_x I_{x^{-1}} K_x x = 
I_x I_x K_x x = 
(I_x \cap I_x \cap K_x)x =
K_x x = J_x$.
Analogously, one can show that $J_x  S_{x^{-1}} S_x = J_x$.
\end{proof}

\begin{thm}\label{thm:SsymmRfull}
Suppose that $S$ is a symmetrically $G$-graded
subring of $R[G]$ where the group $G$ is ordered 
and the ring $R$ is fully idempotent.
Then $S$ is prime if and only if $R$ is prime. 
\end{thm}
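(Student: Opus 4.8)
The plan is to deduce the result from Theorem~\ref{thm:GorderedGprime}, after observing that over the \emph{untwisted} group ring $R[G]$ the notion of $G$-primeness of $S_e$ degenerates to ordinary primeness.

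First I would note that $S$ is ideally symmetrically $G$-graded: this is precisely Proposition~\ref{prop:SsymmSrightsymm}, which applies because $R$ is fully idempotent and $S$ is a symmetrically $G$-graded subring of $R[G]$. Since $G$ is ordered, Theorem~\ref{thm:GorderedGprime} then gives that $S$ is prime if and only if $S_e$ is $G$-prime. So it remains to prove that $S_e$ is $G$-prime if and only if $R$ is prime.

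Next I would identify $S_e$ with $R$. By hypothesis $I_e = R$, hence $S_e = Re$, and $r \mapsto re$ is a ring isomorphism $R \to S_e$ (it is additive, bijective, and $(re)(se) = (rs)(ee) = (rs)e$). In particular the ideals of $S_e$ are exactly the sets $Ke$ for $K$ an ideal of $R$. The crucial observation is that every such ideal is automatically $G$-invariant: by Proposition~\ref{prop:gradedsubring} each $I_x$ is an ideal of $R$, so for any $x \in G$ a short computation in $R[G]$ yields
\[
S_{x^{-1}} (Ke) S_x = (I_{x^{-1}} x^{-1})(Ke)(I_x x) = (I_{x^{-1}} K I_x) e \subseteq (RKR)e \subseteq Ke .
\]
Here the point is that, in a plain group ring, the group elements $x^{-1}, e, x$ merely multiply to $e$ and do not act on coefficients, so no twisting interferes; the inclusion then holds simply because $K$ is a two-sided ideal of $R$. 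Consequently the $G$-invariant ideals of $S_e$ are all of its ideals, so $S_e$ is $G$-prime exactly when it is prime as a ring, i.e.\ exactly when $R$ is prime. Combining this with the previous paragraph proves the theorem.

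I do not anticipate a genuine obstacle: the argument is essentially an assembly of earlier results, and the only step requiring care is the displayed computation establishing automatic $G$-invariance. It is worth emphasising that full idempotence of $R$ enters only indirectly --- through Proposition~\ref{prop:SsymmSrightsymm}, to ensure that $S$ is ideally symmetrically graded so that Theorem~\ref{thm:GorderedGprime} is available --- while the collapse of $G$-primeness to primeness uses only that $R[G]$ is an untwisted group ring.
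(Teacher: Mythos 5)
Your proof is correct and follows essentially the same route as the paper: apply Proposition~\ref{prop:SsymmSrightsymm} to get ideal symmetry, invoke Theorem~\ref{thm:GorderedGprime}, and observe that in the untwisted group ring every ideal of $S_e \cong R$ is automatically $G$-invariant, so $G$-primeness of $S_e$ is just primeness of $R$. The only difference is that you spell out the invariance computation, which the paper leaves as a remark.
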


\begin{proof}
This follows from Theorem \ref{thm:GorderedGprime} and Proposition \ref{prop:SsymmSrightsymm}, since ideals of $R$ are $G$-invariant.
\end{proof}

We now proceed to show that there exist symmetrically graded subrings of \( R[G] \) that are not nearly epsilon-strongly graded (see Proposition~\ref{prop:RL}). For this, we recall some definitions. Let \( T \) be an associative (not necessarily unital) ring and \( M \) a left (resp. right) \( T \)-module. Then \( M \) is \emph{$s$-unital} if \( m \in Tm \) (resp. \( m \in mT \)) for all \( m \in M \). The ring \( T \) is $s$-unital if it is $s$-unital as both a left and right \( T \)-module. From \cite[Cor.~1.5]{courter1974}, we recall the following:

\begin{prop}
Let $R$ be a fully idempotent ring.
The center of $R$ is von Neumann regular. 
In particular, if $R$ is commutative, then 
$R$ is von Neumann regular and hence $s$-unital.
\end{prop}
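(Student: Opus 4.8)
The plan is to establish the first assertion directly — this reproduces \cite[Cor.~1.5]{courter1974} — and then to read off the ``in particular'' clause. Fix $z \in Z(R)$; we must produce $x \in Z(R)$ with $zxz = z$. The first step is to exploit full idempotence at the level of the principal ideal $\langle z \rangle$. Since $z$ is central, $\langle z \rangle = \Z z + zR$, and a routine multiplication (using that $R = R^2$, which holds because $R$ is an idempotent ideal of itself) gives $\langle z \rangle^2 = \Z z^2 + z^2 R$. As $z \in \langle z \rangle = \langle z \rangle^2$, we obtain $z = m z^2 + z^2 r$ for some $m \in \Z$ and $r \in R$; setting $b := m \cdot 1 + r \in R^1$ yields $z = z^2 b$. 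Two consequences are recorded: $zR = z^2 R$ (since $zR = z^2 bR \subseteq z^2 R \subseteq zR$) and $z = z(zb) \in zR$ with $zb \in R$. Put $A := zR$, a two-sided ideal of $R$.

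Next I would show that multiplication by $z$ is an $R$-bimodule automorphism $\mu_z$ of $A$. Left and right multiplication by $z$ coincide on $A$ because $z$ is central, and the same fact makes $\mu_z$ bimodule-linear. Surjectivity is immediate: $\mu_z(A) = zA = z^2 R = zR = A$. For injectivity, observe that $\ker \mu_z = A \cap \Ann_R(z)$ is a two-sided ideal $I$ of $R$, and for $zs, zs' \in I$ one has $(zs)(zs') = (z \cdot zs) s' = 0$ since $zs \in \Ann_R(z)$; hence $I^2 = \{0\}$, and full idempotence forces $I = I^2 = \{0\}$.

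Finally, let $\nu := \mu_z^{-1}$, which is again an $R$-bimodule automorphism of $A$, and set $x := \nu^2(z) \in A \subseteq R$ (legitimate since $z \in A$). Then $zx = \mu_z(\nu^2(z)) = \nu(z)$ and $\nu(z)\, z = \mu_z(\nu(z)) = z$, so $zxz = z$; moreover, for every $s \in R$, the bimodule-linearity of $\nu$ together with the centrality of $z$ gives $sx = \nu^2(sz) = \nu^2(zs) = xs$, so $x \in Z(R)$. Hence $Z(R)$ is von Neumann regular.

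The point where the argument really has to do some work — and the reason the statement is not entirely trivial — is that the quasi-inverse must be \emph{central}: the obvious candidate $zb^2$ does satisfy $z(zb^2)z = z$, but it need not lie in $Z(R)$, and the bimodule-automorphism device above is exactly what repairs this. For the last claim, if $R$ is commutative then $R = Z(R)$ is von Neumann regular; and any von Neumann regular ring is $s$-unital, since from $a = axa$ one gets $a = a(xa) \in aR$ and $a = (ax)a \in Ra$.
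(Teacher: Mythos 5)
Your argument is correct. The paper itself offers no proof of this proposition: it simply recalls the statement from Courter \cite[Cor.~1.5]{courter1974}, so you have supplied a genuine, self-contained argument where the paper relies on a citation. Checking the details: centrality of $z$ gives $\langle z \rangle = \Z z + zR$ and $\langle z \rangle^2 = \Z z^2 + z^2R$ (the appeal to $R=R^2$ is not even needed here), so full idempotence yields $z = z^2b$ with $b = m\cdot 1 + r$, hence $z \in zR =z^2R =: A$; the kernel of $\mu_z$ on $A$ is a two-sided ideal of $R$ (centrality again) that squares to zero, so full idempotence kills it and $\mu_z$ is a bimodule automorphism of $A$; and $x := \mu_z^{-2}(z)$ satisfies $zxz = z$ and is central because $\mu_z^{-1}$ is bimodule-linear and $z$ commutes with $R$. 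All of these steps hold in the nonunital setting, and the final clause (regular $\Rightarrow$ $s$-unital via $a = (ax)a = a(xa)$) is fine. Conceptually your bimodule-automorphism device is a specialization of Courter's theme --- his result concerns the centroid, i.e.\ bimodule endomorphisms of $R$, and your $\mu_z^{-1}$ is exactly such an endomorphism of the ideal $zR$ --- but your version directly produces a \emph{central} quasi-inverse, which is precisely the form of the statement the paper uses; your closing remark correctly identifies why the naive candidate $zb^2$ is insufficient. The only cosmetic caveat is the use of the formal unitalization element $b \in R^1$, which you handle correctly since only $zb, bz \in R$ ever appear.
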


Thus, to find a non-$s$-unital fully idempotent \( R \), we must consider a noncommutative $R$. We also aim to consider rings \( S \) that are not nearly epsilon-strongly \( G \)-graded. 

\begin{prop}\label{prop:nearlynearly}
Suppose that $S$ is a $G$-graded subring of $R[G]$.
Then $S$ is nearly epsilon-strongly $G$-graded
if and only if for every $x \in G$ the ideal 
$I_x$ is $s$-unital both as a left 
$I_x I_{x^{-1}}$-module 
and as a right $I_{x^{-1}} I_x$-module.
\end{prop}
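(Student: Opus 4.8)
The plan is to unwind both sides of the equivalence into statements about the homogeneous components $S_x = I_x x$ and then match them up. Recall that $S$ is nearly epsilon-strongly $G$-graded if and only if for each $x \in G$ and each $s \in S_x$ there exist $\epsilon_x \in S_x S_{x^{-1}}$ and $\epsilon_x' \in S_{x^{-1}} S_x$ with $\epsilon_x s = s = s \epsilon_x'$. First I would compute, using the multiplication in $R[G]$, that $S_x S_{x^{-1}} = (I_x I_{x^{-1}}) e$ and $S_{x^{-1}} S_x = (I_{x^{-1}} I_x) e$, so that the homogeneous component $S_e$-neighbourhoods relevant here are genuinely the subrings $I_x I_{x^{-1}}$ and $I_{x^{-1}} I_x$ of $R$ (both contained in $I_e = R$). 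Writing a general element of $S_x$ as $a x$ with $a \in I_x$, the condition $\epsilon_x s = s$ with $\epsilon_x = b e$, $b \in I_x I_{x^{-1}}$, becomes $b a = a$; similarly $s \epsilon_x' = s$ with $\epsilon_x' = c e$, $c \in I_{x^{-1}} I_x$, becomes $a c = a$. Hence the nearly epsilon-strong condition at $x$ says exactly: every $a \in I_x$ lies in $(I_x I_{x^{-1}}) a$ and in $a (I_{x^{-1}} I_x)$, i.e. $I_x$ is $s$-unital as a left $I_x I_{x^{-1}}$-module and as a right $I_{x^{-1}} I_x$-module.

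The two-sided (left and right) nature of the module conditions matches the two conditions $\epsilon_x s = s$ and $s \epsilon_x' = s$ in the definition, so no subtlety arises there. One should only be slightly careful about the well-definedness of the module actions: $I_x I_{x^{-1}}$ is contained in $I_e = R$ and $I_x$ is an ideal of $R$ by Proposition~\ref{prop:gradedsubring}, so $(I_x I_{x^{-1}}) I_x \subseteq I_x$, and likewise $I_x (I_{x^{-1}} I_x) \subseteq I_x$; thus $I_x$ is indeed a left $I_x I_{x^{-1}}$-module and a right $I_{x^{-1}} I_x$-module in the natural way, and the $s$-unitality statement is meaningful. Assembling: the nearly epsilon-strong condition holds for $S$ iff it holds at every $x \in G$ iff, for every $x$, $I_x$ is $s$-unital both as a left $I_x I_{x^{-1}}$-module and as a right $I_{x^{-1}} I_x$-module, which is the assertion.

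I expect no genuine obstacle here; the only thing requiring care is the bookkeeping in $R[G]$ that turns multiplication of graded pieces into the ideal products $I_x I_{x^{-1}}$ and $I_{x^{-1}} I_x$ in $R$, together with noting that the element $\epsilon_x$ (resp. $\epsilon_x'$) may be taken to depend on $s$, exactly as in the definition, so that the pointwise $s$-unitality (rather than the existence of a single local unit for all of $I_x$ at once) is the correct translation.
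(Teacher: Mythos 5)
Your proposal is correct and follows essentially the same route as the paper, which simply records the chain of equivalences: nearly epsilon-strong $\Leftrightarrow$ each $S_x$ is $s$-unital as a left $S_xS_{x^{-1}}$-module and right $S_{x^{-1}}S_x$-module $\Leftrightarrow$ the corresponding statement for $I_x$ via the identification $S_x = I_x x$. Your version just makes the bookkeeping in $R[G]$ explicit, which the paper leaves implicit.
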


\begin{proof}
The claim follows from the chain of equivalences:
$S$ is nearly epsilon-strongly $G$-graded $\Leftrightarrow$
$\forall x \in G$, $S_x$ is $s$-unital both as a left
$S_x S_{x^{-1}}$-module and as a right
$S_{x^{-1}} S_x$ module $\Leftrightarrow$
$\forall x \in G$, $I_x$ is $s$-unital both as a left
$I_x I_{x^{-1}}$-module and as a right
$I_{x^{-1}} I_x$ module. 
\end{proof}

\begin{cor}\label{cor:notepsilon}
Let $S$ be a symmetrically $G$-graded
subring of $R[G]$ where
$R$ is a fully idempotent unital domain. 
Suppose that there is $x \in G$ with
$0 \subsetneq I_x \subsetneq R$.
Then $S$ is not nearly epsilon-strongly $G$-graded.
\end{cor}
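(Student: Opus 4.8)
The statement to prove is Corollary~\ref{cor:notepsilon}: if $S$ is a symmetrically $G$-graded subring of $R[G]$ with $R$ a fully idempotent unital domain, and if there is some $x \in G$ with $0 \subsetneq I_x \subsetneq R$, then $S$ is not nearly epsilon-strongly $G$-graded. The plan is to invoke Proposition~\ref{prop:nearlynearly}, which reduces the non-property to showing that for the given $x$, the ideal $I_x$ fails to be $s$-unital either as a left $I_x I_{x^{-1}}$-module or as a right $I_{x^{-1}} I_x$-module. The key simplification is that, by Proposition~\ref{prop:gginverse}, symmetric gradedness over a fully idempotent $R$ forces $I_x = I_{x^{-1}}$; hence the relevant acting ring in both cases is $I_x I_x = I_x^2$. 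So it suffices to show that $I_x$ is not $s$-unital as a left (equivalently, by symmetry, right) $I_x^2$-module.

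First I would record that since $R$ is a (unital) domain and $I_x$ is a nonzero ideal, $I_x^2$ is also nonzero. If $I_x$ were $s$-unital as a left $I_x^2$-module, then every $m \in I_x$ would satisfy $m \in I_x^2 m$; in particular, picking any nonzero $m \in I_x$, we could write $m = \sum_i a_i b_i m$ with $a_i, b_i \in I_x$. Since $R$ is a unital domain — hence in particular a commutative-or-not integral domain with no zero divisors — this does not immediately give a contradiction by itself, so the real content is to exploit $I_x \subsetneq R$. The cleanest route: in a domain, for a nonzero ideal $I_x$, $s$-unitality of $I_x$ as an $I_x^2$-module would say $I_x = I_x^2 I_x = I_x^3 \subseteq I_x^2$, but more usefully it forces the existence, for each $m$, of an element of $I_x^2$ acting as a left identity on $m$. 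Take $0 \neq m \in I_x$ and $0 \neq r \in R \setminus I_x$ (which exists since $I_x \subsetneq R$). Then $rm \in I_x$ as well, and $s$-unitality gives $u \in I_x^2$ with $u(rm) = rm$, i.e. $(u r - r)m = 0$; since $R$ is a domain and $m \neq 0$, we get $ur = r$. But $u \in I_x^2 \subseteq I_x$ and $R$ is unital, so if $R$ is commutative this says $r = ur \in I_x$, contradicting $r \notin I_x$. In the noncommutative case one argues with a right $s$-unital structure instead — $mr \in I_x$, get $v \in I_x^2$ with $(mr)v = mr$, hence $m(rv - r) = 0$, so $rv = r$ and $r = rv \in I_x$, again a contradiction.

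Assembling these: by Proposition~\ref{prop:nearlynearly}, $S$ nearly epsilon-strongly $G$-graded would require $I_x$ to be $s$-unital both as a left $I_x I_{x^{-1}}$-module and as a right $I_{x^{-1}} I_x$-module; by Proposition~\ref{prop:gginverse} both of these acting rings equal $I_x^2$; and the argument above shows $I_x$ is $s$-unital as neither (the left case and the right case each produce an element of $I_x$ equal to a prescribed element of $R \setminus I_x$). This contradiction shows $S$ is not nearly epsilon-strongly $G$-graded.

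The main obstacle I anticipate is handling the noncommutative domain carefully: one must be sure that $R$ being a "unital domain" is meant in the usual sense (no nonzero zero divisors), so that cancellation $m \neq 0,\ (ur-r)m = 0 \Rightarrow ur = r$ is legitimate, and that one picks the correct side ($s$-unital on the left versus right) so that the element of $R \setminus I_x$ lands inside $I_x$ for the contradiction — the symmetric-gradedness hypothesis and $I_x = I_{x^{-1}}$ are exactly what make both sides available, so whichever of the two $s$-unitality conditions in Proposition~\ref{prop:nearlynearly} we test, the same kind of cancellation argument applies and yields $r \in I_x$. Everything else is a short direct computation.
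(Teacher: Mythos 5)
Your proof is correct and follows essentially the same route as the paper's: reduce via Proposition~\ref{prop:nearlynearly} and Proposition~\ref{prop:gginverse} (together with full idempotency, so that $I_xI_{x^{-1}}=I_x^2=I_x$) to an $s$-unitality condition, then cancel in the domain to force an element of $R\setminus I_x$ into $I_x$. The paper's version is marginally more direct---it applies $s$-unitality to a nonzero $a\in I_x$ itself and concludes $b=1_R\in I_x$---and your worry about commutativity is moot, since $ur\in I_x$ for any $u\in I_x$ and $r\in R$ because $I_x$ is a two-sided ideal.
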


\begin{proof}
Seeking a contradiction, suppose that
$S$ is nearly epsilon strongly $G$-graded.
Take a nonzero $a \in I_x$.
From Proposition \ref{prop:Rfullyidempotent} and 
Proposition \ref{prop:gginverse} it follows that
$I_x I_{x^{-1}} = I_x I_x = I_x$.
By Proposition~\ref{prop:nearlynearly},
there is $b \in I_x I_{x^{-1}} = I_x$ with 
$ba = a$. But $R$ is unital and hence $1_R a = a$,
so that $(b-1_R)a = ba - 1_R a = a - a = 0$.
Since $a \neq 0$ and $R$ is a domain this implies that
$b = 1_R$ which contradicts $I_x \subsetneq R$.
\end{proof}

\begin{exa}\label{ex:RL}
We follow \cite[Ex. 12.2]{lam1995} closely.
Let $F_0$ be a field of characteristic zero.
Let $A$ be the first Weyl algebra over $F_0$
with generators $X$ and $Y$ satisfying the relation
$XY - YX = 1$. Let $\delta$ be the differentiation 
operator on $F := F_0[Y]$. Then $A$ is the differential 
polynomial ring $F[X;\delta]$. Put $L := AX$.
Then $L$ is a simple domain without identity.
Put $R := L \oplus F_0$. Then $R$ is a fully
idempotent domain with exactly three ideals, 
$\{ 0 \}$, $L$ and $R$.
\end{exa}

\begin{prop}\label{prop:RL}
Let $R$ and $L$ be defined as in 
Example \ref{ex:RL}.
Suppose that we for each $x \in G$ 
take $I_x \in \{ R,L \}$
such that the set 
$G_R := \{ x \in G \mid I_x = R \}$
is a subgroup of $G$. The following assertions hold:
\begin{enumerate}[{\rm (a)}]

\item The ring $S$ is symmetrically $G$-graded.

\item The ring $S$ is strongly $G$-graded $\Leftrightarrow$
$S$ is nearly epsilon-strongly $G$-graded 
$\Leftrightarrow$ $G_R = G$.

\item If $G$ is an ordered group, then $S$ is prime.

\end{enumerate}
\end{prop}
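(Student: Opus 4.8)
The plan is to deduce the three assertions from results established earlier in the paper, using two elementary observations. First, since $R$ is fully idempotent, Proposition~\ref{prop:Rfullyidempotent} gives $IJ = I \cap J$ for all ideals $I,J$ of $R$; applied to the chain $\{0\} \subsetneq L \subsetneq R$ this means that the product of $I_x$ and $I_y$ is always the inclusion-smaller of the two, so in particular $RR = R$ and $RL = LR = LL = L$. Second, since $G_R$ is a subgroup of $G$, we have $x \in G_R \iff x^{-1} \in G_R$, and $x,y \in G_R \Rightarrow xy \in G_R$. I shall also use from Example~\ref{ex:RL} that $R$ is a fully idempotent unital domain whose only ideals are $\{0\}$, $L$ and $R$.

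For (a), I would first check that $(I_x)_{x \in G}$ is a $G$-filter. If $xy \in G_R$ then $I_{xy} = R$ and $I_x I_y \subseteq I_{xy}$ is trivial; if $xy \notin G_R$, then $x$ and $y$ are not both in $G_R$, so $I_x I_y = I_x \cap I_y = L = I_{xy}$. Hence $S$ is a $G$-graded subring of $R[G]$ by Proposition~\ref{prop:gradedsubring}. Since $x \in G_R \iff x^{-1} \in G_R$, we have $I_x = I_{x^{-1}}$ for every $x \in G$, and since $R$ is fully idempotent, Proposition~\ref{prop:gginverse} then yields that $S$ is symmetrically $G$-graded.

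For (b), I would establish ``$S$ strongly $G$-graded $\iff G_R = G$'' and ``$S$ nearly epsilon-strongly $G$-graded $\iff G_R = G$'' separately. If $G_R = G$ then $S = R[G]$, and $RR = R$ yields $S_x S_y = (RR)(xy) = S_{xy}$, so $S$ is strongly $G$-graded; moreover $R$ is unital, so $S = R[G]$ is a unital strongly $G$-graded ring and therefore nearly epsilon-strongly $G$-graded (take $\epsilon_x = \epsilon_x' = 1_S$, noting $1_S \in S_e = S_x S_{x^{-1}} = S_{x^{-1}} S_x$). Conversely, if $S$ is strongly $G$-graded then for each $x$ the equality $S_x S_{x^{-1}} = S_e$ translates into $I_x I_{x^{-1}} = R$; but $I_x I_{x^{-1}} = I_x \cap I_{x^{-1}}$, which equals $R$ only when $I_x = R$, so $x \in G_R$ and hence $G_R = G$. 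Finally, if $G_R \neq G$ we may pick $x$ with $I_x = L$, so that $\{0\} \subsetneq I_x \subsetneq R$; since $R$ is a fully idempotent unital domain and $S$ is symmetrically $G$-graded by (a), Corollary~\ref{cor:notepsilon} shows that $S$ is not nearly epsilon-strongly $G$-graded. Taking contrapositives completes (b).

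For (c), part (a) shows that $S$ is a symmetrically $G$-graded subring of $R[G]$; $G$ is ordered by hypothesis and $R$ is fully idempotent, so Theorem~\ref{thm:SsymmRfull} applies and $S$ is prime if and only if $R$ is prime. Since $R$ is a domain it is prime, and hence so is $S$. The whole argument is bookkeeping with earlier results; the one step that requires genuine care is part (b), where one must remember that for nonunital rings ``strongly graded'' does not by itself imply ``nearly epsilon-strongly graded'', so the implications back to $G_R = G$ have to be extracted by hand — via $I_x I_{x^{-1}} = R$ in the strong case and via Corollary~\ref{cor:notepsilon} in the nearly epsilon-strong case — and the reverse implications go through precisely because $R$ happens to be unital.
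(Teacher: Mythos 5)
Your proposal is correct and follows essentially the same route as the paper's proof: (a) via Propositions~\ref{prop:gradedsubring} and \ref{prop:gginverse} (you merely spell out the $G$-filter check that the paper leaves to the subgroup hypothesis), (b) using unitality, Corollary~\ref{cor:notepsilon} and the triviality of $G_R=G\Rightarrow$ strong, and (c) via Theorem~\ref{thm:SsymmRfull} together with $R$ being a (prime) domain. The only cosmetic difference is in (b): the paper closes the cycle strong $\Rightarrow$ nearly epsilon-strong $\Rightarrow G_R=G \Rightarrow$ strong, whereas you prove the two equivalences separately, adding a direct (and correct, though redundant) argument that strong grading forces $I_xI_{x^{-1}}=R$ and hence $I_x=R$.
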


\begin{proof}
(a) Since $G_R$ is a subgroup of $G$,
Proposition \ref{prop:gradedsubring} and
Proposition \ref{prop:gginverse} imply that 
$S$ is a symmetrically $G$-graded subring of $R[G]$.

(b) Since $R$, and hence also $S$, is unital,
the implication $S$ is strongly $G$-graded $\Rightarrow$
$S$ is nearly epsilon-strongly $G$-graded holds.
From Corollary \ref{cor:notepsilon} the implication
$S$ is nearly epsilon-strongly graded 
$\Rightarrow$ $G_R = G$ holds.
The implication $G_R = G$ $\Rightarrow$
$S$ is strongly $G$-graded, is trivial.

(c) This follows from Theorem \ref{thm:SsymmRfull},
since $R$ is a domain and hence prime.
\end{proof}

We end this article by providing an example of the claim made in Remark~\ref{rem:symmbutnotsymm} (see Proposition~\ref{prop:example}). To this end, we consider the following:

\begin{exa}\label{ex:JK}
Let $\overline{\Q}$ 
denote the field of
algebraic numbers, that is the algebraic
closure of $\Q$. Let $O$ denote 
the ring of algebraic integers in 
$\overline{\Q}$.
Let $J$ be the ideal of $O$ generated
by $\{ 2^{1/2^n} \mid n \geq 1 \}$. 
Since $2^{1/2^n} = ( 2^{1/2^{n+1}} )^2$
for $n \geq 1$ it follows that $J$
is an idempotent ideal. 
We claim that $1 \notin J$.
Assume, for a moment, that the claim holds.
Then $J$ is a proper
ideal of $O$.
Put $K := O \sqrt{2}$.
Then $K$ is an ideal of the ring $J$.
Since $1 \notin J$ it follows that
$\sqrt{2} \notin J \sqrt{2}$ and hence
that $K = O \sqrt{2} \supsetneq 
J O \sqrt{2} = JK$.
Note that $K \subsetneq J$,
since supposing that $J = K$, then
using idempotency of $J$, would give 
$J = J^2 = JK \subsetneq J$.
Summarizing, we have 
$JK \subsetneq K \subsetneq J$.

Now we show the claim. 
Seeking a contradiction, suppose that $1 \in J$.
Then there exist $m\ge1$, $a_1,\dots,a_m\in O$,
and integers $n_1,\dots,n_m\ge1$ with
$1 = \sum_{i=1}^m a_i\,2^{1/2^{n_i}}$.
Let $N:=\max\{n_1,\dots,n_m\}$. 
For each $i$, we have
$2^{1/2^{n_i}} \;=\; \bigl(2^{1/2^{N}}\bigr)^{\,2^{\,N-n_i}}$,
so each summand $a_i\,2^{1/2^{n_i}}$ lies in the principal ideal $\bigl(2^{1/2^{N}}\bigr)$. Therefore $1 \in \bigl(2^{1/2^{N}}\bigr)$ and so
$1 = 2^{1/2^{N}} \sum_{i=1}^m a_i\,\bigl(2^{1/2^{N}}\bigr)^{\,2^{\,N-n_i}-1}$.
Since $O$ is a ring and $2^{1/2^N}$ is an algebraic integer, the sum belongs to $O$. Hence $2^{1/2^{N}}$ is a unit in $O$,
which is a contradiction.
Indeed, if $2^{1/2^N}$ were a unit in $O$, then
$2=\bigl(2^{1/2^N}\bigr)^{2^N}$ would also be a 
unit in $O$, which is impossible since $1/2\notin O$.
\end{exa}

\begin{prop}\label{prop:example}
Let \( J \) and \( K \) be defined as in Example~\ref{ex:JK}, and let $I_x := J$ for every $x\in G$. Then \( S \) is symmetrically $G$-graded. Moreover, if $G$ is nontrivial, then
\( S \) is not ideally symmetrically $G$-graded.
\end{prop}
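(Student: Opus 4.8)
The plan is to verify the two assertions separately, using the explicit ring structure from Example~\ref{ex:JK}, where $R = O$ (the ring of algebraic integers), $I_x = J$ for all $x \in G$, so $S = \oplus_{x \in G} Jx \subseteq O[G]$.

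First I would show that $S$ is symmetrically $G$-graded. Since $I_x I_y = J \cdot J = J = I_{xy}$ by idempotency of $J$, Proposition~\ref{prop:gradedsubring} gives that $S$ is a $G$-graded subring of $O[G]$. Then $S_x S_{x^{-1}} S_x = J^3 x = Jx = S_x$ for every $x \in G$ (again using $J^2 = J$), so $S$ is symmetrically $G$-graded directly from the definition — no appeal to the fully idempotent machinery is needed here, just idempotency of $J$ itself.

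Next I would exhibit a graded ideal of $S$ witnessing the failure of ideal symmetry when $G$ is nontrivial. The natural candidate is the graded ideal $I$ of $S$ with homogeneous components $I_x = Kx$ for all $x \in G$; one checks this is indeed a graded ideal of $S$ since $K$ is an ideal of the ring $J$ (established in Example~\ref{ex:JK}) and since $J \cdot K, K \cdot J \subseteq K$. Now pick any $x \in G$ with $x \neq e$ (possible since $G$ is nontrivial), though in fact any $x$ works here. The relevant computation is
\[
S_x S_{x^{-1}} I_x = (Jx)(Jx^{-1})(Kx) = J^2 K x = JK x,
\]
and by Example~\ref{ex:JK} we have $JK \subsetneq K$, so $S_x S_{x^{-1}} I_x = JKx \subsetneq Kx = I_x$. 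This strict inclusion shows the defining equality $S_x S_{x^{-1}} I_x = I_x$ fails, hence $S$ is not ideally symmetrically $G$-graded.

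The only subtlety worth flagging is that for $G$ \emph{trivial} (i.e. $G = \{e\}$) one has $S_e = J = R$ in this construction and $I = Ke$ with $S_e S_e I_e = J K = K = I_e$ by idempotency of $J$, so there is no obstruction — this is why the nontriviality hypothesis on $G$ is genuinely used: when $G$ is nontrivial there is a component $x$ for which the "shifted" idempotency $J \cdot J \cdot K$ still equals $JK$ rather than $K$, and $JK \subsetneq K$ is exactly the failure. I do not expect any real obstacle; the one routine verification to carry out carefully is that $x \mapsto Kx$ does define a graded ideal of $S$ (closure under left and right multiplication by homogeneous elements $Jy$), which reduces immediately to $KJ \subseteq K$ and $JK \subseteq K$, both holding because $K$ is an ideal of $J$.
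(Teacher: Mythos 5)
Your argument is correct and follows essentially the same strategy as the paper's: symmetry comes from $J^2=J$, and the failure of ideal symmetry is witnessed by a graded ideal built from $K$, using $JK\subsetneq K$ from Example~\ref{ex:JK}. The one substantive difference is your choice of witness, $I=\oplus_{x\in G}Kx$, versus the paper's $M:=Je+\sum_{e\neq x\in G}Kx$, and here your choice is actually the better one: the paper's $M$ is \emph{not} a left ideal of $S$, since for $y\neq e$ one has $S_yM_e=J^2y=Jy\not\subseteq Ky=M_y$ (recall $K\subsetneq J$), whereas your $I$ genuinely is a graded ideal, by exactly the verification $JK\subseteq K$, $KJ\subseteq K$, $K\subseteq J$ that you carry out. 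The computation $S_xS_{x^{-1}}I_x=J^2Kx=JKx\subsetneq Kx=I_x$ then finishes the proof. (One cosmetic point: the section's standing convention $I_e=R$ forces $R=J$ here rather than $R=O$, so the ambient group ring is $J[G]$; this changes nothing in the argument since $S=\oplus_{x\in G}Jx$ either way.)

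Your closing remark about the trivial group, however, is false: you assert $S_eS_eI_e=JK=K$ ``by idempotency of $J$'', but idempotency only gives $J^2K=JK$, and Example~\ref{ex:JK} establishes precisely that $JK\subsetneq K$. As you yourself observe earlier (``in fact any $x$ works here''), the defining equality already fails at $x=e$, so the ring $J$ with its trivial grading is not ideally symmetrically graded either; the two statements contradict each other and the later one should be deleted. This does not affect the validity of your proof of the proposition as stated, since nontriviality of $G$ is merely a hypothesis and you are not required to show it is necessary.
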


\begin{proof}
By Proposition~\ref{prop:symmetricidempotent},
$S$ is symmetrically $G$-graded, since $J$ is idempotent.
Now suppose that $G \neq \{ e \}$.
Put $M := Je + \sum_{e \neq x \in G} Kx$
and note that $M$ is a graded ideal of $S$.
Fix $x \in G \setminus \{ e\}$. Then
$S_x S_{x^{-1}} M_x = J^2 K x = JKx
\subsetneq Kx = M_x$. Thus, $S$ is not ideally symmetrically $G$-graded.
\end{proof}

\end{document}